\def\@map#1#2[#3]{\mbox{$#1 \colon\thinspace #2 \longrightarrow #3$}}
\def\map#1#2{\@ifnextchar [{\@map{#1}{#2}}{\@map{#1}{#2}[#2]}}
\renewcommand{\epsilon}{\ensuremath{\varepsilon}}
\renewcommand{\to}{\ensuremath{\longrightarrow}}
\renewcommand{\mapsto}{\ensuremath{\longmapsto}}
\newtheoremstyle{theoremm}{}{}{\itshape}{}{\scshape}{.}{ }{}
\theoremstyle{theoremm}
\newtheorem{thm}{Theorem}
\newtheorem{lem}[thm]{Lemma}
\newtheorem{prop}[thm]{Proposition}
\newtheoremstyle{remark}{}{}{}{}{\scshape}{.}{ }{}
\theoremstyle{remark}
\newtheorem{rem}[thm]{Remark}
\renewcommand{\ker}[1]{\ensuremath{\operatorname{\text{Ker}}\left({#1}\right)}}
\newcommand{\reth}[1]{Theorem~\protect\ref{th:#1}}
\numberwithin{equation}{section}
\begin{document}

\title[The Borsuk-Ulam Theorem for $n$-valued maps between surfaces]{The Borsuk-Ulam Theorem \\ for $n$-valued maps between surfaces} 
\author[V.~C.~Laass]{Vinicius Casteluber Laass}
\address{VINICIUS CASTELUBER LAAASS \newline
Universidade Federal da Bahia, IME, Departamento de Matemática \newline
 Av. Milton Santos, S/N Ondina CEP: 40170-110, Salvador-BA, Brazil.}
\email{vinicius.laass@ufba.br}
\author[C.~M.~Pereiro]{Carolina de Miranda e Pereiro}
\address{CAROLINA DE MIRANDA E PEREIRO \newline
Universidade Federal do Esp\'{i}rito Santo, UFES, Departamento de Matem\'{a}tica \newline
29075-910, Vit\'{o}ria, Esp\'{i}rito Santo, Brazil}
\email{carolina.pereiro@ufes.br}

%
\date{\today}
%



\begin{abstract}
In this work we analysed the validity of a type of Borsuk-Ulam theorem for multimaps between surfaces. We developed an algebraic technique involving braid groups to study this problem for $n$-valued maps. As a first application we described when the Borsuk-Ulam theorem holds for splits and non-splits multimaps $\phi \colon X \multimap Y$ in the following two cases: $(i)$ $X$ is the $2$-sphere eqquiped with the antipodal involution and $Y$ is either a closed surface or the Euclidean plane; $(ii)$ $X$ is a closed surface different of the $2$-sphere eqquiped with a free involution $\tau$ and $Y$ is the Euclidean plane. The results are exhaustive and in the case $(ii)$ are described in terms of an algebraic condition involving the first integral homology group of the orbit space $X / \tau$.
\end{abstract}

\maketitle
\let\thefootnote\relax\footnotetext{\emph{2020 Mathematics Subject Classification}. Primary: 55M20, 57M07; Secondary: 20F36.\\
\emph{Key Words and phrases}. Borsuk-Ulam theorem, surfaces, multifunction, braid groups}
\section{Introduction}

The celebrated Borsuk-Ulam theorem states that for any continuous map $f\colon\mathbb S^{m}\rightarrow \mathbb R^{m}$ there exists a point $x \in \mathbb S^{m}$ such that $f(x)=f(-x)$~\cite[Satz II]{Bo}. More information about the history, some generalisations and applications of these result may be found for example in the book of Matou\v{s}ek~\cite{Mato}, as well as in the paper~\cite{Sten} of Steinlein published in 1985, which was the most complete survey of this subject at that time.

In 1983, H. Schirmer proved a version of the Borsuk-Ulam theorem for a family of $n$-valued maps $\phi \colon X \multimap \mathbb R^{m}$, where $X$ is a compact metric space, simply connected and has the homology of $\mathbb S^{m}$ (see~\cite[Theorem~2 and Corollary]{Sc1}). Others authors were also interested in discuss antipodes problems for multivalued maps and there are some results in this direction as we can see in~\cite{vHS} and~\cite{Izy} for example. On the other hand, the fixed point problem for multivalued maps between surfaces were studied in a recent work of D. L. Gonçalves and J. Guaschi using a braid approach (see~\cite{GG1}).

Motivated by the previous works, the main goal of this paper is to establish a version of the Borsuk-Ulam theorem for $n$-valued maps between certain surfaces and we develop a new perspective using configuration spaces and braid groups as tools. Before to enunciate more precisely the results, we recall some definitions and properties of multivalued maps.



A multivalued function (or a multifunction) $\phi\colon X\multimap Y$  is a correspondence that associates to each $x\in X$ a non-empty subset $\phi(x)$ of $Y$. Following~\cite{GG1} a multifunction $\phi$ is \emph{upper semi-continous} if for all $x\in X$, $\phi(x)$ is closed, and given an open set $V$ in $Y$, the set $\left\lbrace x\in X\,:\,\phi(x)\subset V\right\rbrace $ is open in $X$, and $\phi$ is \emph{lower semi-continuous} if the set $\left\lbrace  x\in X\,:\, \phi(x)\cap V\neq \emptyset\right\rbrace$ is open in $X$, and is \emph{continous} if it is upper semi-continuous and lower semi-continuous. An \emph{$n$-valued map} (or multimap) $\phi:X\multimap Y$ is a continuous multifuction that to each $x\in X$ assigns a unordered subset of $Y$ of cardinal exactly $n$. Schirmer~\cite{Sc2} defined that a $n$-valued map $\phi :X\multimap Y$ is \emph{split} if there exists single-valued maps $f_{1},\ldots,f_{n}:X\rightarrow Y$ such that $\phi(x)=\left\lbrace f_{1}(x),\ldots,f_{n}(x)\right\rbrace $ for all $x\in X$. In particular, all split $n$-valued maps are continuous if $Y$ is an Hausdorff topological space~\cite[Propositon 42]{GG1}.

A convenient way of dealing with $n$-valued function is making use of configuration spaces, as explored by R. F. Brown and D. L. Gonçalves~\cite{BG} and also by the latter author and J. Guaschi~\cite{GG1}, and we describe in the following. Given a topological space $Y$, we recall that 
\[F_{n}(Y)=\left\lbrace (y_{1},\ldots,y_{n})\in Y^{n} \,:\, y_{i}\neq y_{j}\,\,\mbox{if}\,\, i\neq j\right\rbrace\] is the \emph{$n^{th}$ ordered configuration space} of $Y$. The symmetric group $S_{n}$ on $n$ elements acts freely on $F_{n}(Y)$ by permuting the coordinates. The corresponding quotient space is denoted by $D_{n}(Y)=F_{n}(Y)/S_{n}$ and it is called the \emph{$n^{th}$ unordered configuration space} of $Y$. 

There is a natural correspondence between a $n$-valued map $\phi \colon X\multimap Y$ and a single map $\Phi \colon X\rightarrow D_{n}(Y)$, if we consider $\Phi(x)=\phi(x)$. In the particular case where $X$ and $Y$ are metric spaces, it is helpful to use this correspondence because the multimap $\phi$ is continuous if, and only if, the single-valued map $\Phi$ is also continuous by~\cite[Theorem 8]{GG1}.

Now, we are able to define the Borsuk-Ulam property for multivalued maps. Let $X$ and $Y$ be topological spaces such that $X$ admits a free involution $\tau$. Given a multivalued map $\phi\colon X\multimap Y$, we define a \emph{Borsuk-Ulam coincidence of $\phi$ with respect to $\tau$} as a pair  $\left\{x,\tau(x)\right\}$ of points of $X$ such that $\phi(x)\cap\phi(\tau(x))\neq \emptyset$. The behavior of the $n$-valued maps is divided between the distinct classes splits and non-splits and since we are interested in classify these maps in relation to Borsuk-Ulam coincidences, we define that the triple $(X,\tau;Y)$ has the \emph{$n$-split Borsuk-Ulam property} (resp.\ \emph{$n$-non-split Borsuk-Ulam property}) if every split (resp.\ non-split) $n$-valued map from $X$ to $Y$ has a Borsuk-Ulam coincidence with respect to $\tau$. Finally we say that the triple $(X,\tau;Y)$ has the \emph{$n$-Borsuk-Ulam property} if it has the $n$-split and $n$-non-split Borsuk-Ulam property. In the remaining of the text, we abbreviate Borsuk-Ulam property by BUP.

Note that all $1$-valued maps are split and the above definition coincides with the BUP for single-valued maps in the same sense that were used in the works~\cite{DPV,MatMonSan}, for example. In the particular cases where $X$ is a closed surface and $Y$ is either a closed surface or the Euclidean plane, the complete classification of the triples $(X,\tau;Y)$ that possess the $1$-BUP can be found in~\cite{G,GG0}.

Some results about Borsuk-Ulam coincidences for $n$-valued maps are straightforward consequence from the single-valued case. For example, if the triple $(X,\tau;Y)$ has the $1$-BUP, then any split $n$-valued map $\phi=\left\{f_{1},\ldots,f_{n}\right\} \colon X\multimap Y$ has Borsuk-Ulam coincidence, for all $n$. This follows with the same proof given by H. Schirmer~\cite[Theorem 2]{Sc1}. Nevertheless, we can not conclude the same for non-split $n$-valued maps, as we shall see in~\reth{result2}, where we have a triple $(X,\tau;Y)$ that satisfies the $1$-BUP but does not satifies the $2$-BUP, for example. On the other hand, if $\tau$ is the antipodal map on the $2$-sphere, then the triple $(\mathbb S^{2}, \tau; \mathbb S^{2})$ is an example that does not satisfies the $1$-BUP although it satisfies de $n$-BUP for all $n\geq 2$, which is one of the consequences of the following main theorem.

\begin{thm}\label{th:result1}
	Let $n$ be a positive integer, let $\tau\colon \mathbb{S}^2 \to \mathbb{S}^2$ be the antipodal map and let $Y$ be $\mathbb{R}^2$ or a closed surface. The triple $(\mathbb{S}^2,\tau;Y)$ has the $1$-BUP if, and only if, $Y \neq \mathbb{S}^2$ and it has the $n$-BUP for all $n \geq 2$.
\end{thm}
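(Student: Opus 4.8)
The plan is to split the statement into the single-valued case ($n=1$) and the genuinely multivalued case ($n\ge 2$), and to reduce the latter to split maps with target $\mathbb{S}^2$. For $n=1$ the equivalence is essentially known: when $Y=\mathbb{R}^2$ the $1$-BUP is exactly the classical Borsuk-Ulam theorem; when $Y=\mathbb{S}^2$ the antipodal self-map of $\mathbb{S}^2$ is a single map with no Borsuk-Ulam coincidence with respect to $\tau$, so the $1$-BUP fails; and when $Y$ is a closed surface different from $\mathbb{S}^2$ the $1$-BUP holds by the classification of~\cite{G,GG0}. Together these give the claimed equivalence ``$1$-BUP $\iff Y\neq\mathbb{S}^2$''.

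For $n\ge 2$ I would first observe that \emph{every} $n$-valued map $\phi\colon\mathbb{S}^2\multimap Y$ is split: the associated map $\Phi\colon\mathbb{S}^2\to D_n(Y)$ lifts through the covering $F_n(Y)\to D_n(Y)$ because $\pi_1(\mathbb{S}^2)$ is trivial, and the coordinates $f_1,\dots,f_n$ of a lift $\mathbb{S}^2\to F_n(Y)\subset Y^n$ are continuous with pairwise distinct values, so $\phi=\{f_1,\dots,f_n\}$. Hence there are no non-split $n$-valued maps from $\mathbb{S}^2$ and the $n$-non-split BUP is vacuously true, leaving only the $n$-split BUP. If $Y\neq\mathbb{S}^2$ the $1$-BUP holds, and applying it to $f_1$ yields $x_0$ with $f_1(x_0)=f_1(\tau(x_0))\in\phi(x_0)\cap\phi(\tau(x_0))$, a Borsuk-Ulam coincidence (this is Schirmer's argument recalled in the introduction). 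Thus the whole theorem reduces to one new case: split $n$-valued maps into $\mathbb{S}^2$ with $n\ge 2$.

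Here I would argue by contradiction using degrees. Suppose $\phi=\{f_1,\dots,f_n\}\colon\mathbb{S}^2\multimap\mathbb{S}^2$ with $n\ge 2$ has no Borsuk-Ulam coincidence, so $f_i(x)\neq f_j(\tau(x))$ for all $i,j$, while $f_i(x)\neq f_j(x)$ for $i\neq j$ because $\phi(x)$ has cardinality $n$. The basic tool is that if two maps $g,h\colon\mathbb{S}^2\to\mathbb{S}^2$ never agree then $g$ is homotopic to the antipode of $h$, hence $\deg g=-\deg h$. Since $\deg\tau=-1$ and $f_j\circ\tau$ has degree $-\deg f_j$, the relation $f_i(x)\neq f_j(\tau(x))$ forces $\deg f_i=\deg f_j$ for all $i,j$, whereas the cardinality relation $f_i(x)\neq f_j(x)$ for a pair $i\neq j$ (available precisely because $n\ge 2$) forces $\deg f_i=-\deg f_j$; together these give $\deg f_i=0$ for every $i$. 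On the other hand, using only $f_i(x)\neq f_i(\tau(x))$, the map $g_i(x)=(f_i(x)-f_i(\tau(x)))/\lvert f_i(x)-f_i(\tau(x))\rvert$ is a well-defined \emph{odd} self-map of $\mathbb{S}^2$, hence of odd degree; and the homotopy $h_t(a,b)=(a-tb)/\lvert a-tb\rvert$ on $F_2(\mathbb{S}^2)$ shows that the first-coordinate projection is homotopic to the difference map, so that $\deg f_i=\deg g_i$. This makes $\deg f_i$ simultaneously zero and odd, the desired contradiction, so the $n$-split BUP holds for $Y=\mathbb{S}^2$ as well.

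The main obstacle is exactly the case $Y=\mathbb{S}^2$: there the $1$-BUP fails, so the reduction to the single-valued theorem is unavailable, and the entire content of the statement is that increasing $n$ from $1$ to $\ge 2$ restores the property. The crux is the parity clash, namely that the presence of at least two branches forces each $f_i$ to have even (indeed zero) degree, while the oddness of the self-difference map $g_i$ forces odd degree. The delicate point I would check carefully is that the first-coordinate projection and the difference map on $F_2(\mathbb{S}^2)$ are homotopic, so that the two computations really refer to the same integer $\deg f_i$; this is settled by the explicit homotopy $h_t$, after which the remaining steps are bookkeeping of the homotopies supplied by the non-coincidence relations. In the braid-group framework of the paper I would expect these degree identities to appear as the concrete shadow of the constraints that a hypothetical non-coincidence imposes on homomorphisms into the braid groups of $\mathbb{S}^2$.
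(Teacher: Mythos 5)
Your proposal is correct, and for the decisive case $Y=\mathbb{S}^2$ it takes a genuinely different route from the paper. For $n=1$, and for $n\geq 2$ with $Y\neq\mathbb{S}^2$, you do exactly what the paper does (Propositions~\ref{propA} and~\ref{propB}): simple connectivity of $\mathbb{S}^2$ forces every $n$-valued map to split (so the $n$-non-split BUP holds vacuously), and Schirmer's argument applied to a single branch settles the split case via the $1$-BUP. The divergence is at Proposition~\ref{propC}. The paper invokes its braid-group machinery: a failure of the $n$-BUP produces, via Theorem~\ref{th:diagram}, a homomorphism $\Psi\colon \pi_1(\mathbb{RP}^2)\cong\mathbb{Z}_2\to B^2_{n,n}(\mathbb{S}^2)$ with $\pi\circ\Psi=\theta_\tau$, so the generator must map to an element of order two lying in $B^2_{n,n}(\mathbb{S}^2)\setminus B_{n,n}(\mathbb{S}^2)$, contradicting the theorem of Gillette and Van Buskirk~\cite{GvB} that the full twist $\Delta_{2n}^2$, which is pure, is the \emph{only} element of order two in $B_{2n}(\mathbb{S}^2)$. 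You instead argue with classical degree theory: the non-coincidence relations $f_i(x)\neq f_j(\tau(x))$ give $\deg f_i=\deg f_j$ (since $\deg\tau=-1$), the disjointness relations $f_i(x)\neq f_j(x)$ for some $i\neq j$ (available precisely because $n\geq 2$) give $\deg f_i=-\deg f_j$, hence all degrees vanish; meanwhile the normalized self-difference $g_i$ is odd, hence of odd degree by Borsuk's odd mapping theorem, and your homotopy $h_t(a,b)=(a-tb)/\lvert a-tb\rvert$ on $F_2(\mathbb{S}^2)$ is indeed well defined (if $a=tb$ with $\lvert a\rvert=\lvert b\rvert=1$ and $t\in[0,1]$ then $t=1$ and $a=b$) and correctly identifies $\deg g_i=\deg f_i$ by composing with $x\mapsto (f_i(x),f_i(\tau(x)))$. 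All steps check out, so this is a valid alternative proof. What each approach buys: yours is elementary and self-contained, using only degree theory for self-maps of $\mathbb{S}^2$ and no structure theory of sphere braid groups; the paper's is heavier but uniform with the framework of Theorem~\ref{th:diagram} that drives the rest of the article, in particular Theorem~\ref{th:result2}, where the target is $\mathbb{R}^2$, degree-style invariants are unavailable, and the braid-group computations are essential.
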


The above result is a complete classification due the fact that the antipodal map is essentially the only free involution on $\mathbb{S}^2$ (see for example~\cite[Theorem~1]{Asoh}).

Besides the theorem above, we also give a full classification of all triples $(X,\tau,\mathbb R^{2})$ in relation to BUP, where $X$ is a closed surface. Before to state the other main result of this paper let us describe some facts and notation. Suppose that $X$ is a pathwise connected $CW$-complex equipped with a free cellular involution $\tau$. We say that two points $x,y \in X$ are equivalent if $y \in \{ x,  \tau(x) \}$. We denote the correspondent quotient space by $X_\tau$, which we also call the orbit space, and the natural projection by $p_\tau \colon X \to X_\tau$. Note that $X_\tau$ is also a pathwise $CW$-complex and $p_\tau$ is a double covering map. Therefore we have the following short exact sequence:
\begin{equation}\label{seq:theta}
\xymatrix{
1 \ar[r] & \pi_1 (X) \ar[r]^-{(p_\tau)_\#} & \pi_1(X_\tau) \ar[r]^-{\theta_\tau} & \dfrac{\pi_1(X_\tau)}{(p_\tau)_\#\left(\pi_1 (X)\right)} \cong \mathbb{Z}_2 = \{ \overline{0},\overline{1} \} \ar[r] & 1, 
}
\end{equation}
where $\theta_\tau$ is the natural projection. The homomorphism $\theta_\tau$ has a factorization to the abelianization of $\pi_1 (X_\tau)$ which is the $1$-homology group of $X_\tau$. We denote this factorization by $\hat{\theta_\tau} \colon H_1 (X_\tau) \to \mathbb{Z}_2$. If $X_\tau$ is a closed non-orientable surface, we denote by $\hat{\delta}$ the only non-trivial element of order two of $H_1(X_\tau)$. We  obtain the next result that extends the $1$-BUP case due to D. L. Gonçalves~\cite{G}.

\begin{thm}\label{th:result2}
	Let $n \geq 1$ and $X$ be a closed surface different of $\mathbb{S}^2$ which admits a free involution $\tau \colon X\rightarrow X$. The following conditions are true:
	\begin{enumerate}
		\item If $X_\tau$ is a closed orientable surface or $X_\tau$ is a closed non-orientable surface and $\hat{\theta_\tau}(\hat{\delta})=\overline{0}$, then the triple $(X,\tau;\mathbb{R}^2)$ does not have the $n$-split BUP and it does not have the $n$-non-split BUP.
		
		\item If $X_\tau$ is a closed non-orientable surface, $\hat{\theta_\tau}(\hat{\delta})=\overline{1}$ and $n$ is even, then the triple $(X,\tau;\mathbb R^{2})$ has the $n$-split BUP, but it does not have the $n$-non-split BUP.
		
		\item If $X_\tau$ is a closed non-orientable surface, $\hat{\theta_\tau}(\hat{\delta})=\overline{1}$ and $n$ is odd, then the triple $(X,\tau;\mathbb{R}^2)$ has the $n$-BUP.
	\end{enumerate}
\end{thm}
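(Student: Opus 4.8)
The plan is to convert the Borsuk-Ulam question into a purely group-theoretic one and then into a computation in an abelianisation. Write $N=X_\tau$; since $X\ne\St$ the surface $N$ is aspherical. A coincidence-free $n$-valued map is a map $\Phi\colon X\to D_n(\R^2)$ with $\phi(x)\cap\phi(\tau(x))=\vide$ for all $x$, so $x\mapsto(\phi(x),\phi(\tau(x)))$ lands in the space $D_{n,n}(\R^2)$ of ordered pairs of disjoint unordered $n$-point subsets of $\R^2$, is equivariant for $\tau$ and for the swap $\Z_2$-action, and descends to $\overline G\colon N\to D_{n,n}(\R^2)/\Z_2$. As $F_{2n}(\R^2)$ is aspherical and $\sn\wr\Z_2$ acts freely, $D_{n,n}(\R^2)/\Z_2$ is a $K(\Gamma_n,1)$, where $\Gamma_n\le B_{2n}$ is the preimage of $\sn\wr\Z_2$ under $B_{2n}\to\sn[2n]$. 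Asphericity of $N$ then gives the criterion I would use throughout: $(X,\tau;\R^2)$ admits a coincidence-free $n$-valued map if, and only if, there is a homomorphism $\psi\colon\pi_1(N)\to\Gamma_n$ with $\rho\circ\psi=\theta_\tau$, where $\rho\colon\Gamma_n\to\Z_2$ is the wreath projection; and such a $\phi$ is split exactly when its single-valued ``first strand'' exists, i.e.\ when $\psi$ restricted to $\pi_1(X)=\ker{\theta_\tau}$ has trivial block permutations.

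The computational core I would carry out is $\Gamma_n^{ab}$. From the extension $1\to B_{n,n}\to\Gamma_n\xrightarrow{\rho}\Z_2\to1$, with $B_{n,n}\le B_{2n}$ the colour-preserving (mixed) braid group, one has $B_{n,n}^{ab}\cong\Z^3$ generated by the two within-block writhes and the inter-block linking number $\ell$; the swap conjugation exchanges the two writhes and fixes $\ell$, so the exact sequence of the extension yields $0\to\Z^2\to\Gamma_n^{ab}\to\Z_2\to0$ with $\Z^2=\langle\bar a,\ell\rangle$. The extension is pinned down by the class of $s^2$ (the square of the block half-twist $s$, i.e.\ the full inter-block twist), which equals $n^2\ell$; hence $\Gamma_n^{ab}\cong\langle\bar a,\ell,s\mid 2s=n^2\ell\rangle$. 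Consequently $\Gamma_n^{ab}$ is \emph{torsion-free}, $\cong\Z^2$, when $n$ is odd, whereas for $n$ even it is $\Z^2\oplus\Z_2$ with the $\Z_2$-summand detected exactly by $\rho$. I expect the precise identification of $B_{n,n}^{ab}$ together with the class of $s^2$ to be the main technical obstacle.

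Granting this, the three cases follow. For~(3), suppose such a $\psi$ existed with $n$ odd. Since $\Gamma_n^{ab}$ is torsion-free, the induced $H_1(N)\to\Gamma_n^{ab}$ kills the order-two class $\hat{\delta}$, so $\overline{1}=\hat{\theta_\tau}(\hat{\delta})=\rho\psi(\hat{\delta})=\overline{0}$, a contradiction; hence no coincidence-free $n$-valued map exists and the full $n$-BUP holds. For the split statements whenever $\hat{\theta_\tau}(\hat{\delta})=\overline{1}$ (both~(2) and~(3)): a split coincidence-free map would supply a genuine single-valued $f_1\colon X\to\R^2$ with $f_1(x)\ne f_1(\tau(x))$, contradicting the $1$-BUP, which holds here by the classification of the case $n=1$ (due to Gon\c{c}alves); thus the $n$-split BUP holds, exactly as in the Schirmer-type argument recalled in the introduction.

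It remains to build coincidence-free maps where the theorem asserts failure. In~(1) the class $\hat{\theta_\tau}$ lifts to $\Z$ (either $H_1(N)$ is torsion-free, or its $2$-torsion is killed because $\hat{\theta_\tau}(\hat{\delta})=\overline{0}$), so the $1$-BUP fails and there is $f\colon X\to\R^2$ with $h(x):=f(x)-f(\tau(x))\ne0$; putting $u=h/|h|$, the split map $\phi(x)=\{\,f(x)+k\,u(x):1\le k\le n\,\}$ is coincidence-free, and braiding these $n$ strands inside a small disc produces a non-split coincidence-free map, so neither BUP holds. In~(2), with $n$ even and $N=N_k$, $k\ge2$ (indeed $N\ne\rp$ since $X\ne\St$), I would realise $\psi$ by sending the unique orientation-reversing generator to the swap $s$ and absorbing the resulting full twist $s^2$ with the remaining generators: the abelianised equation $\sum_{i\ge2}[\psi(a_i)]=-\tfrac{n^2}{2}\ell$ is solvable precisely because $n$ is even, after which the residual element of $[\Gamma_n,\Gamma_n]$ is corrected inside $B_{n,n}$, yielding a non-split coincidence-free map and hence failure of the $n$-non-split BUP. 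Besides the abelianisation of Paragraph~2, the delicate point here is exactly this last exact realisation of the surface relation in $\Gamma_n$, whose solvability is governed by the same parity of $n$ that creates the torsion in $\Gamma_n^{ab}$.
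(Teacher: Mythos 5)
Your reduction is exactly the paper's Theorem~\ref{th:diagram} specialised to the aspherical target $\R^2$: coincidence-free $n$-valued maps correspond to homomorphisms $\psi\colon \pi_1(X_\tau)\to B^2_{n,n}$ with $\pi\circ\psi=\theta_\tau$, the split ones being those with $\psi(\ker{\theta_\tau})\subset P_{2n}$. The split halves of (2) and (3) (Gon\c{c}alves' $1$-BUP plus Schirmer's argument) are also the paper's. Your case (3) is correct and is an attractive repackaging of the paper's proof of Proposition~\ref{prop3}: where you invoke torsion-freeness of $\Gamma_n^{\mathrm{ab}}$ for odd $n$, the paper defines a $\Z_2$-valued homomorphism $\varepsilon$ on $B_{n,n}$ (mod-$2$ inter-block linking number), checks its invariance under conjugation by $\Delta_{2n}$, and derives $\overline{0}=\varepsilon(\Delta_{2n}^2)=\overline{n^2}$ --- the same computation, since the torsion in $\Gamma_n^{\mathrm{ab}}$ is created precisely by the class of the full twist. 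Two caveats: the facts you defer ($B_{n,n}^{\mathrm{ab}}\cong\Z^3$, the swap action, the class of $s^2$) are true but constitute the real technical work (the paper proves a presentation of $B_{n,n}$ exactly for this purpose), and the class of $s^2$ is $2n(n-1)\bar{a}+n^2\ell$ rather than $n^2\ell$ (the within-block pairs contribute), which happily does not change the parity conclusion.

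The genuine gaps are in your two non-split constructions. In case (1) your split construction is correct (and more geometric than the paper's), but ``braiding these $n$ strands inside a small disc'' cannot produce a non-split map: modifying a multimap over a disc in the domain never changes the permutation monodromy $\pi_1(X)\to S_n$, because every loop can be homotoped off the disc, and non-splitness is equivalent to this monodromy being nontrivial; moreover any braiding must be arranged compatibly with $\tau$ so as not to create coincidences. This is repairable --- for instance, replace two of your $n$ points by the roots of $(z-c(x))^2=r(x)$ where $r\colon X\to\R^2\setminus\{0\}$ has odd winding number around some essential loop of $X$ --- but as stated it is not a proof; the paper instead exhibits $\Psi'(g)=(\sigma_1\omega_n)^{i(g)}$ and applies the converse of Theorem~\ref{th:diagram}.

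The non-split half of case (2) is the most serious gap. With the presentations your argument elsewhere relies on, the generators $a_i$ occur only in commutators, which vanish in $H_1$, so your equation $\sum_{i\geq 2}[\psi(a_i)]=-\tfrac{n^2}{2}\ell$ is not the abelianised relation; the correct abelianised condition is $2[\psi(\delta)]=0$, and the required $2$-divisibility must be absorbed by the $B_{n,n}$-component of $\psi(\delta)$ itself. More importantly, solvability in the abelianisation is only necessary: to obtain an actual homomorphism you must realise the inverse of $\psi(\delta)^2$ \emph{exactly} as a product of $m$ commutators with prescribed images in $\Z_2$, and you give no argument for this --- it is a commutator-length problem, not an abelian one. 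The gap is fatal for the Klein bottle ($m=0$), which is a genuine instance of case (2): there are no generators $a_i$ at all, so the relation $uvuv^{-1}=1$ must be satisfied on the nose. This is precisely why the paper constructs the special braid $\Omega=\sigma_2\sigma_3\sigma_1^{-1}\sigma_2^{-1}\in B^2_{2,2}\setminus B_{2,2}$ satisfying $\Delta_4\Omega\Delta_4^{-1}=\Omega^{-1}$, uses it to solve the surface relation exactly for $n=2$, cables up to all even $n$ via Theorem~\ref{th:aumentak}, and then deduces non-splitness of the resulting coincidence-free map \emph{indirectly} from the already-established $n$-split BUP --- a final step you would also need, since you only assert, and never verify, that the map you produce is non-split.
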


We emphasize to the reader that the above result is complete in the sense that it is possible to know when a triple $(X,\tau;\mathbb{R}^2)$ has (or not) the $n$-BUP. Also, we analyzed the behavior of $n$-split and $n$-non-split multimaps in relation to BUP separately.

We also obtain partial results about the classfication of the triples $(X,\tau;Y)$ in relation to the $n$-BUP, where $X$ and $Y$ are both closed surfaces, $X \neq \mathbb{S}^2$. These results are rather straightforward and it is a consequence of Theorem~\ref{th:result2} and the classification of the triples in relation to $1$-BUP given in~\cite{GG0}. We enunciate and prove these examples in Section~\ref{sc:examples}. The study to obtain a full classification of the above mentioned triples in relation to $n$-split BUP and $n$-non-split BUP is a work in progress.

Besides the introduction and Section~\ref{sc:examples}, the manuscript is organised as follows. In Section~\ref{sec:2} we give a brief resume of the braid groups of surfaces. We define some important subgroups which is usefull to obtain \reth{diagram}. This result gives a necessary and suficient algebraic condition to study the BUP for $n$-valued maps  whose domain is a CW-complex eqquiped with a free cellular involution and the target is a closed surface. In Section~\ref{sec:3} we derive some properties for the subgroups mentioned above when the surface is the Euclidean plane which is usefull to prove \reth{result2}. Section \ref{sec:res1} of the paper is devoted to proving \reth{result1} and the proof will follow from Propositions \ref{propA}--\ref{propC}. We prove \reth{result2} at Section \ref{sec:res2} and the proof will follow from Propositions \ref{prop1}--\ref{prop3}.

\section{Preliminaries and generalities}\label{sec:2}

Let $n$ be a positive integer and let $Y$ be either a closed surface or $\mathbb R^{2}$. The braid groups and the pure braid groups of the plane were defined by Artin~\cite{A1,A2} and generalized for surfaces by Fox and Neuwirth~\cite{FoN} using configuration spaces. The fundamental group $\pi_{1}(D_{n}(Y))$ (resp. $\pi_{1}(F_{n}(Y))$) is the $n$-string braid group (resp. $n$-string pure braid group) of $Y$ and it is denoted by $B_{n}(Y)$ (resp. $P_{n}(Y)$). This give rise to the following short exact sequence induced by the $\left( n! \right)$-fold covering map $F_n(Y) \to D_n(Y)$:
\begin{equation}\label{seq:1}
	\xymatrix{
		1 \ar[r] & P_{n}(Y) \ar[r] & B_{n}(Y) \ar[r] & S_{n} \ar[r] & 1.
	}		
\end{equation}

The notion of \emph{mixed braid groups} was defined in~\cite{GG2,GG3} taking an intermediary quotient of the configuration space $F_{m+n}(Y)$ by the free action of the subgroup $S_{m}\times S_{n}\subset S_{m+n}$. The orbit space $F_{m+n}(Y)/{S_{m}\times S_{n}}$ is denoted by $D_{m,n}(Y)$ and  the mixed braid groups is defined by $B_{m,n}(Y)=\pi_{1}(D_{m,n}(Y))$. Therefore, we have the following short exact sequence induced by the $\left( \left( m! \right) \times \left( n! \right)\right)$-fold covering map $F_{m,n}(Y) \to D_{m,n}(Y)$:
\begin{equation}\label{seq:1.1}
\xymatrix{
	1 \ar[r] & P_{m,n}(Y) \ar[r] & B_{m,n}(Y) \ar[r] & S_{m}\times S_{n} \ar[r] & 1.
}		
\end{equation}

An arbitrary element of $D_{m,n}(Y)$ can be seen as a pair $(\bar{x},\bar{y})$, where $\bar{x}$ and $\bar{y}$ are disjoints subsets of $Y$ of cardinality $m$ and $n$, respectively. If $m=n$, let $D^{2}_{n,n}(Y)$ be the orbit space of the free involution $\hat{\tau}:D_{n,n}(Y)\rightarrow D_{n,n}(Y)$ defined by $\hat{\tau}(\bar{x},\bar{y})=(\bar{y},\bar{x})$. 
Therefore, we obtain another important subgroup of $B_{2n}(Y)$ defined by $B^{2}_{n,n}(Y)=\pi_{1}(D^{2}_{n,n}(Y))$, which are related by the following short exact sequence induced by the double covering map $D_{n,n}(Y) \to D_{n,n}^2(Y)$:
\begin{equation}\label{seq:2}
\xymatrix{
		1 \ar[r] & B_{n,n}(Y) \ar[r] & B^{2}_{n,n}(Y) \ar[r]^-{\pi} & \mathbb Z_{2} \ar[r] & 1.
}		
\end{equation}

Theses subgroups of $B_{2n}(Y)$ are important for this work, and they will appear in the following theorem. The result below is a generalization of~\cite[Proposition~13]{GG0} and it will play a key rôle in the rest of the paper.

\begin{thm}\label{th:diagram} Let $n$ be a positive integer, let $X$ be a pathwise connected and locally compact $CW$-complex eqquiped with a cellular free involution $\tau$ and let $Y$ be either $\mathbb R^{2}$ or a closed surface. Suppose that the $n$-split BUP (resp.\ n-non-split BUP) does not hold for the triple $(X,\tau;Y)$. Then there exists a homomorphism $\Psi \colon \pi_{1}(X_\tau)\rightarrow B^{2}_{n,n}(Y)$ such that $\Psi ( \ker{ \theta_\tau}) \subset P_{2n}(Y)$ (resp.\ $\Psi ( \ker{ \theta_\tau}) \not\subset P_{2n}(Y)$) and the following diagram is commutative:
				\begin{equation}\label{diagram}
					\begin{gathered}\xymatrix{
							\pi_1(X_\tau) \ar@{.>}[rr]^{\Psi} \ar[rd]_{\theta_\tau} && B^{2}_{n,n}(Y) \ar[ld]^{\pi} && \\
							& \mathbb{Z}_2 .& & &
				}\end{gathered}\end{equation}
Conversely, if such a factorization $\Psi$ of the diagram~(\ref{diagram}) exists and it satisfies $\Psi ( \ker{ \theta_\tau}) \subset P_{2n}(Y)$ (resp.\ $\Psi ( \ker{ \theta_\tau}) \not\subset P_{2n}(Y)$), then the $n$-split BUP (resp.\ $n$-non-split BUP) does not hold for the triple $(X,\tau;Y)$ in the following cases:
\begin{enumerate}[(a)]

\item The dimension of the CW-complex $X$ is less than or equal to two and $Y=\mathbb{S}^2$.

\item The dimension of the CW-complex $X$ is less than or equal to three and $Y = \mathbb{RP}^2$.

\item $Y$ is different from $\mathbb{S}^2$ and $\mathbb{RP}^2$.

\end{enumerate}
\end{thm}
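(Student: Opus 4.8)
\emph{Proof strategy.} The plan is to prove the two implications separately, turning a coincidence-free multimap into the algebraic factorization $\Psi$ and back, and to isolate the three cases $(a)$--$(c)$ in a single obstruction-theoretic step. For the first (easy) implication, suppose the $n$-split (resp.\ $n$-non-split) BUP fails, witnessed by a split (resp.\ non-split) $n$-valued map $\phi\colon X \multimap Y$ with $\phi(x)\cap\phi(\tau(x))=\emptyset$ for every $x\in X$. Viewing $\phi$ as a map $\Phi\colon X\to D_n(Y)$ via \cite{GG1}, I would define $\Phi_2\colon X\to D_{n,n}(Y)$ by $\Phi_2(x)=(\phi(x),\phi(\tau(x)))$; this is well defined and continuous \emph{precisely} because there is no coincidence, and since $\tau^2=\mathrm{id}$ it satisfies $\Phi_2(\tau(x))=\hat\tau(\Phi_2(x))$. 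Hence $\Phi_2$ descends to $\bar\Phi_2\colon X_\tau\to D^2_{n,n}(Y)$, and I set $\Psi=(\bar\Phi_2)_\#$. Commutativity of~\eqref{diagram} is then the assertion that the two $\mathbb{Z}_2$-coverings $p_\tau$ and the $\bar\Phi_2$-pullback of the cover in~\eqref{seq:2} coincide, which holds because $\Phi_2$ covers $\bar\Phi_2$. Restricting to $\ker{\theta_\tau}=(p_\tau)_\#(\pi_1(X))$, every element is the class of a loop $\tilde\gamma$ in $X$; its image lies in $B_{n,n}(Y)$, and it lies in $P_{2n}(Y)$ iff its permutation in $S_n\times S_n\subset S_{2n}$ is trivial, which by the lifting criterion for $F_n(Y)\to D_n(Y)$ happens iff $\Phi_\#(\pi_1(X))\to S_n$ is trivial, i.e.\ iff $\phi$ is split. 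This yields $\Psi(\ker{\theta_\tau})\subset P_{2n}(Y)$ in the split case and $\not\subset$ in the non-split case.

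For the converse I reverse the construction. Given $\Psi$ with $\pi\circ\Psi=\theta_\tau$ satisfying the containment hypothesis, the first step is to realize $\Psi$ by a genuine continuous map $g\colon X_\tau\to D^2_{n,n}(Y)$ with $g_\#=\Psi$ (up to conjugation), which I postpone to the last paragraph. Granting $g$, I observe that the double cover in~\eqref{seq:2} is classified by $\pi$, so its pullback along $g$ is the $\mathbb{Z}_2$-cover of $X_\tau$ classified by $\pi\circ g_\#=\pi\circ\Psi=\theta_\tau$; since double covers of $X_\tau$ are classified by elements of $\mathrm{Hom}(\pi_1(X_\tau),\mathbb{Z}_2)$ and $p_\tau$ is the one classified by $\theta_\tau$ (sequence~\eqref{seq:theta}), this pullback is equivariantly isomorphic to $p_\tau\colon X\to X_\tau$. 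The covering map of the pullback then supplies an equivariant lift $\Phi_2\colon X\to D_{n,n}(Y)$ with $\Phi_2\circ\tau=\hat\tau\circ\Phi_2$. Writing $\Phi_2(x)=(\bar x_1,\bar x_2)$ and setting $\phi(x)=\bar x_1$, equivariance gives $\phi(\tau(x))=\bar x_2$, whence $\phi(x)\cap\phi(\tau(x))=\emptyset$ and $\phi$ has no Borsuk--Ulam coincidence. That $\phi$ is split (resp.\ non-split) follows from $\Psi(\ker{\theta_\tau})\subset P_{2n}(Y)$ (resp.\ $\not\subset$) exactly as above, using that $\tau_\#$ interchanges the two $S_n$-blocks so that triviality of one forces triviality of the other.

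The crux---and the only place where the three cases enter---is realizing $\Psi$ by a map $g$. Since $X_\tau$ is a CW-complex, I would build $g$ skeleton by skeleton: a map on the $2$-skeleton inducing $\Psi$ on $\pi_1$ always exists, and the obstructions to extending over $(k+1)$-cells lie in $H^{k+1}(X_\tau;\pi_k(D^2_{n,n}(Y)))$ for $k\geq 2$. As $F_{2n}(Y)\to D^2_{n,n}(Y)$ is a finite regular covering, $\pi_k(D^2_{n,n}(Y))\cong\pi_k(F_{2n}(Y))$ for $k\geq 2$, reducing everything to the homotopy of ordered configuration spaces. If $Y\notin\{\mathbb{S}^2,\mathbb{RP}^2\}$ (including $Y=\mathbb{R}^2$), then by the asphericity results of Fadell and Neuwirth $F_{2n}(Y)$ is a $K(\pi,1)$, all higher obstruction groups vanish, and $g$ exists with no restriction on $\dim X$, giving case $(c)$. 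For $Y=\mathbb{S}^2$ with $\dim X\leq 2$ one has $\dim X_\tau\leq 2$, so there are no obstructions beyond the $2$-skeleton at all, giving case $(a)$. For $Y=\mathbb{RP}^2$ with $\dim X\leq 3$ the only possible obstruction sits in $H^3(X_\tau;\pi_2(F_{2n}(\mathbb{RP}^2)))$, and since $\mathbb{RP}^2$ minus a point is homotopy equivalent to a circle, the Fadell--Neuwirth fibration together with its long exact sequence yields $\pi_2(F_{2n}(\mathbb{RP}^2))=0$, so that obstruction vanishes, giving case $(b)$. I expect this realization step to be the main obstacle: the passage to and from $\Psi$ is formal, but converting an abstract homomorphism into a map is genuinely governed by the higher homotopy of configuration spaces, and it is exactly the non-vanishing of $\pi_2(F_{2n}(\mathbb{S}^2))\cong\mathbb{Z}$ and of $\pi_3(F_{2n}(\mathbb{RP}^2))$ that forces the sharp dimensional hypotheses in $(a)$ and $(b)$.
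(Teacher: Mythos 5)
Your overall strategy is exactly the paper's: both directions pass between a coincidence-free $n$-valued map and the $(\tau,\hat\tau)$-equivariant map $x \mapsto (\phi(x),\phi(\tau(x)))$ into $D_{n,n}(Y)$, the split/non-split dichotomy is detected by whether the image of $\ker{\theta_\tau}$ lands in $P_{2n}(Y)$, and the converse hinges on realizing $\Psi$ by a map $X_\tau \to D^2_{n,n}(Y)$ via obstruction theory, with the three cases (a)--(c) entering only at that realization step.

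The one step that does not hold up as written is your treatment of case (b). You claim that, since $\mathbb{RP}^2$ minus a point is homotopy equivalent to a circle, the Fadell--Neuwirth fibration and its long exact sequence ``yield'' $\pi_2(F_{2n}(\mathbb{RP}^2))=0$. That deduction is not formal. The fibration $F_{2n}(\mathbb{RP}^2)\to\mathbb{RP}^2$ has aspherical fiber $F_{2n-1}(\mathbb{RP}^2\setminus\{\ast\})$, so the exact sequence only gives
\[
0 \to \pi_2\bigl(F_{2n}(\mathbb{RP}^2)\bigr) \to \pi_2(\mathbb{RP}^2)\cong\mathbb{Z} \xrightarrow{\ \partial\ } \pi_1\bigl(F_{2n-1}(\mathbb{RP}^2\setminus\{\ast\})\bigr),
\]
and you still must prove that the connecting homomorphism $\partial$ is injective; asphericity of the fiber alone can never force $\pi_2$ of the total space to vanish (compare the product fibration $S^1\times S^2\to S^2$, where $\partial=0$). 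Injectivity of $\partial$ requires genuine input: for instance, for $2n=2$ it follows from the finiteness of $P_2(\mathbb{RP}^2)\cong Q_8$, which forces $\operatorname{im}\partial=\ker\bigl(\mathbb{Z}\to P_2(\mathbb{RP}^2)\bigr)$ to be nontrivial, and one then climbs the tower of fibrations with aspherical fibers to reach $2n$. The paper sidesteps all of this by quoting \cite[Proposition~6]{GG4}: the universal cover of $F_{2n}(\mathbb{RP}^2)$ is $\mathbb{S}^3$, whence $\pi_2=0$ at once. So the fact you need is true and citable, and your gap is repairable, but the argument you give for it is incomplete. A smaller inaccuracy: your closing remark that the non-vanishing of $\pi_2(F_{2n}(\mathbb{S}^2))\cong\mathbb{Z}$ is what forces the dimension hypothesis in (a) is only correct for $n=1$; for $n\geq 2$ the universal cover of $F_{2n}(\mathbb{S}^2)$ is also homotopy equivalent to $\mathbb{S}^3$, so $\pi_2$ vanishes and the first potentially non-zero obstruction lies in $\pi_3$. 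This does not affect your proof of (a), which correctly uses only $\dim X_\tau\leq 2$.
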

\begin{rem}
The Theorem~\ref{th:diagram} gives us a necessary and sufficient algebraic condition to investigate the validity of the BUP for $n$-valued maps from $X$ to $Y$ in the cases where $X$ is a closed surface and $Y$ is a closed surface or the Euclidean plane.	
\end{rem}	

\begin{proof}[Proof of Theorem~\ref{th:diagram}] First of all, the hypothesis over $X$ implies that $X$ is a metric space by~\cite[Chapter~IV, Proposition~5.8 and Theorem~5.14]{Hu}. Since $Y$ is the Euclidean plane or a closed surface, then $Y$ is also a metric space. Thus, we can consider $n$-valued maps from $X$ to $Y$ as single maps from $X$ to $D_n(Y)$ by~\cite[Theorem 8]{GG1}.

Suppose first that the triple $(X,\tau;Y)$ does not have the $n$-BUP which means that there exists a map $\Phi\colon X \to D_n(Y)$ without Borsuk-Ulam coincidence. Then, the map $\hat{\Phi} \colon X \to D_{n,n}(Y)$ given by $\hat{\Phi}(x) = (\Phi(x),\Phi(\tau(x)))$ is well defined. Note that $\hat{\Phi}$ is $(\tau,\hat{\tau})$-equivariant, where $\hat{\tau}$ is the free involution on $D_{n,n}(Y)$ defined by $\hat{\tau}(\overline{x},\overline{y}) = (\overline{y},\overline{x})$. Using standard covering spaces arguments analougous that was used in the first part of the proof of~\cite[Lemma~5]{GGL} there exists an homomorphism $\Psi:\pi_{1}(X_\tau)\rightarrow B^{2}_{n,n}(Y)$  such that the following diagram is commutative:
\begin{equation}\label{diagram_proof}
\begin{gathered}\xymatrix{
\pi_1(X) \ar[rr]^-{\hat{\Psi}} \ar[d]_-{(p_\tau)_\#} && B_{n,n}(Y) \ar[d] \\
\pi_1(X_\tau) \ar[rr]^-{\Psi} \ar[rd]_{\theta_\tau} && B^{2}_{n,n}(Y) \ar[ld]^{\pi} && \\
							& \mathbb{Z}_2 ,& & &
				}\end{gathered}\end{equation}
where $\hat{\Psi} = \hat{\Phi}_\#$. If the triple $(X,\tau;Y)$ does not have the $n$-split BUP, then we may suppose without loss of generality that the multimap $\Phi$ is split. This occurs if, and only if, there exists maps $f_1, \ldots f_n \colon X \to Y$ such that $\hat{\Phi}(x) = \left( \{ f_1(x) , \ldots, f_n(x) \} , \{ f_1(\tau(x)) , \ldots, f_n(\tau(x)) \} \right)$ for all $x \in X$. The above equality is equivalent that the map $\hat{\Phi}$ has a lift with respect to the covering $F_{2n}(Y) \to D_{n,n}(Y)$ which is algebraically equivalent to $P_{2n}(Y) \supset {\rm Im}\left( \hat{\Psi} \right) = \Psi( \left( {\rm Im } (p_\tau)_\# \right) = \Psi \left( \ker{\theta_\tau} \right)$. Analogous preceding arguments show that $\Psi ( \ker{ \theta_\tau}) \not\subset P_{2n}(Y)$ if the triple $(X,\tau;Y)$ does not have the $n$-non-split BUP. The diagram~(\ref{diagram}) is the bottom of diagram~(\ref{diagram_proof}) so we proved the first part of the theorem.

Now, we will prove the converse part of the theorem. We assert that there exists a map $\overline{\Phi}\colon\thinspace X_\tau \to D^2_{n,n}(Y)$ that induces $\Psi$. We treat the three cases of the statement in turn. Before to do this, note that the dimension of $X_\tau$ as a CW-complex is the same as $X$ because $\tau$ is free and cellular.

\begin{enumerate}[(a)]
	
\item\label{it:a} The existence of the map $\overline{\Phi}$ follows using classical arguments of obstruction theory in the same manner that were used in~\cite[Proof of Proposition 13, item (a)]{GG0}.
	
\item\label{it:b} The universal covering of $D^2_{n,n}(Y)$ is the same of $F_{2n}(\mathbb{RP}^2)$ which is $\mathbb{S}^3$ by~\cite[Proposition~6]{GG4}. Since $X_\tau$ has dimension less or equal to three, once more using classical obstruction theory, we may construct the map $\overline{\Phi}$.
	
\item\label{it:c} The space $F_{2n}(Y)$ is a ${\rm K}(\pi,1)$ if $Y \neq \mathbb{S}^2,\mathbb{RP}^2$ by~\cite[Chapter~4, Theorem~1.1]{FadHus} and~\cite[Corollary~2.2]{FN}, and then $D^2_{n,n}(Y)$ is also a ${\rm K}(\pi,1)$. The existence of the map $\overline{\Phi}$ is assured by~\cite[Theorem~4]{GGL}.
\end{enumerate}
Let $\hat{\Phi}\colon\thinspace X \to D_{n,n}(Y)$ be the lift of the map $\overline{\Phi} \circ p_\tau$ with respect to the covering $D_{n,n}(Y) \to D^2_{n,n}(Y)$. So we have the following commutative diagram:
\begin{equation}\label{diagram_proof_2}
\begin{gathered}\xymatrix{
X \ar[r]^-{\hat{\Phi}} \ar[d]_-{p_\tau} & D_{n,n}(Y) \ar[d] \\
X_\tau 	\ar[r]_-{\overline{\Phi}} & D^2_{n,n}(Y).
}\end{gathered}	
\end{equation}
Let $\hat{\Psi}$ be the induced homomorphism by $\hat{\Phi}$ on the level of fundamental group. So, from the diagrams (\ref{diagram}) and (\ref{diagram_proof_2}) we obtain the algebraic commutative diagram (\ref{diagram_proof}). Using similar arguments that were used in~\cite[Proofs of Lemma~5 and Theorem 7]{GGL}, we conclude that $\hat{\Phi}$ is a $(\tau,\hat{\tau})$-equivariant map and there exists a map $\Phi\colon\thinspace X \to D_n(Y)$ that satisfies $\hat{\Phi}(x) = (\Phi(x),\Phi (\tau(x)))$ for all $x \in X$. Therefore the map $\Phi$ does not have Borsuk-Ulam coincidence. Finally, using similar arguments that were used in the first part of the proof, we conclude that $\Phi$ is split (resp.\ non-split) if $\Psi ( \ker{ \theta_\tau}) \subset P_{2n}(Y)$ (resp.\ $\Psi ( \ker{ \theta_\tau}) \not\subset P_{2n}(Y)$) and the conclusion of the second part of the theorem follows. 
\end{proof}

The next result will be useful to prove \reth{result2} in Section~\ref{sec:res2}.

\begin{thm}\label{th:aumentak} Given integers numbers $n,k\geq 1$, there exists an homomorphism $\tilde{\Psi} \colon B^{2}_{n,n}(\mathbb{R}^2)\rightarrow B^2_{nk,nk}(\mathbb{R}^2)$ for which the following diagram is commutative: 
\begin{equation}\label{diag_inclusion}
\begin{gathered}\xymatrix{
B^{2}_{n,n}(\mathbb{R}^2) \ar[rr]^-{\tilde{\Psi}} \ar[rd]_-{\pi_1} && B^2_{nk,nk}(\mathbb{R}^2) \ar[ld]^-{\pi_2}\\
							& \mathbb{Z}_2 ,& 
				}\end{gathered}\end{equation}
where $\pi_1$ and $\pi_2$ are as defined in~(\ref{seq:2}).
\end{thm}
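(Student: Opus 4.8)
The plan is to construct $\tilde{\Psi}$ geometrically at the level of configuration spaces and then pass to fundamental groups. Recall that $B^2_{n,n}(\mathbb{R}^2) = \pi_1(D^2_{n,n}(\mathbb{R}^2))$, where a point of $D^2_{n,n}(\mathbb{R}^2)$ is an \emph{unordered} pair $\{\bar{x},\bar{y}\}$ of disjoint $n$-point subsets of $\mathbb{R}^2$, and the covering-induced map $\pi$ of~(\ref{seq:2}) records whether a loop swaps the two blocks $\bar{x}$ and $\bar{y}$. The key idea is to replace each of the $2n$ points by a small cluster of $k$ points. Concretely, I would fix once and for all an embedding that ``fattens'' a single point into $k$ nearby distinct points --- for instance, send a point $z \in \mathbb{R}^2$ to the $k$ points $z + \varepsilon(\zeta_1, \dots, \zeta_k)$ for fixed distinct $\zeta_i$ and small $\varepsilon$ --- but the cleaner way is to use that $\mathbb{R}^2$ embeds in itself with room to spare: choose $k$ disjoint open disks and a homeomorphism onto each, so that a configuration $\{\bar{x},\bar{y}\}$ of $2n$ points gives rise to $2nk$ points grouped as two blocks of $nk$.

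The main technical point is to make this assignment continuous and to ensure the resulting clusters stay mutually disjoint as the original points move. First I would define a map $F_{n,n}(\mathbb{R}^2) \to F_{nk,nk}(\mathbb{R}^2)$ on the ordered level: given $(x_1,\dots,x_n,y_1,\dots,y_n)$ with all coordinates distinct, replace the $i$-th point $x_i$ by $k$ points lying in a disk of radius $r(x_1,\dots,y_n)$ centered at $x_i$, where $r$ is a continuous positive function (of the minimal pairwise distance among the $2n$ points) small enough that the $2n$ disks are pairwise disjoint; do the same for each $y_j$. This yields $2nk$ distinct points, partitioned into an $nk$-block coming from the $\bar{x}$-points and an $nk$-block coming from the $\bar{y}$-points. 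This construction is equivariant for the obvious $S_n \times S_n$ actions (permuting the $x$-clusters among themselves and the $y$-clusters among themselves, together with a fixed permutation inside each cluster), and it is equivariant for the swap involution $\hat{\tau}$ that exchanges the $\bar{x}$- and $\bar{y}$-blocks, provided I choose the within-cluster configuration symmetrically so that swapping $\bar{x} \leftrightarrow \bar{y}$ on the original level induces exactly the block-swap on the fattened level.

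Passing to quotients, the equivariance gives a continuous map $\overline{f}\colon D^2_{n,n}(\mathbb{R}^2) \to D^2_{nk,nk}(\mathbb{R}^2)$ that respects the two-block structure and hence commutes with the block-swapping double covers of~(\ref{seq:2}). I would then set $\tilde{\Psi} = \overline{f}_\#$ on fundamental groups. The commutativity of~(\ref{diag_inclusion}) then reduces to checking that $\overline{f}$ covers the identity on the $\mathbb{Z}_2$ recording the block swap: a loop in $D^2_{n,n}(\mathbb{R}^2)$ swaps $\bar{x}$ and $\bar{y}$ if and only if its image under $\overline{f}$ swaps the two $nk$-blocks, which is immediate from the construction since the $\bar{x}$-block maps into one $nk$-block and the $\bar{y}$-block into the other. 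Thus $\pi_2 \circ \tilde{\Psi} = \pi_1$, as required.

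The step I expect to be the main obstacle is arranging the fattening to be simultaneously $\hat{\tau}$-equivariant and well defined on the unordered quotient, i.e.\ making the choice of within-cluster positions genuinely canonical rather than depending on an ordering of the points. The danger is that a naive rule (such as ``place the $k$ points along a fixed horizontal segment'') breaks the symmetry needed for $\hat{\tau}$-equivariance or fails to descend past the $S_n \times S_n$ action. I would resolve this by choosing the cluster template to be invariant under the relevant permutations --- for example placing the $k$ points at the vertices of a regular $k$-gon around each center, with a radius depending only on the permutation-invariant minimal-distance function --- so that the construction depends only on the \emph{set} of $2n$ centers together with their block-partition, and not on any chosen labelling. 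Once this canonicity is secured, descent to the quotients and the verification of~(\ref{diag_inclusion}) are formal.
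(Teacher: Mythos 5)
Your proposal is correct and is essentially the paper's own proof: the paper likewise fattens each of the $2n$ points into a cluster of $k$ nearby points whose size is controlled by $\tfrac{1}{2k}$ times the minimal pairwise distance of the configuration, checks continuity, descends through the quotients $F_{2n}(\mathbb{R}^2) \to D_{n,n}(\mathbb{R}^2) \to D^2_{n,n}(\mathbb{R}^2)$, and takes $\tilde{\Psi}$ to be the homomorphism induced on fundamental groups. The only (immaterial) difference is the cluster template: the paper uses exactly the ``naive'' rule you worried about, namely the collinear points $x_i + j\cdot(\delta(z),0)$ for $j=0,\dots,k-1$, and this descends just as well as your regular $k$-gon, because what the descent actually requires is not any internal symmetry of the template but only that the same template is used at every point and that it depends on the configuration through a quantity invariant under the $S_n\times S_n$ action and the block swap.
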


\begin{proof}
Given a point $z = (x_1,x_2,\ldots,x_n,y_1,y_2,\ldots,y_n) \in F_{2n}(\mathbb{R}^2)$, consider the real number
$$\delta(z) = \dfrac{1}{2k} \cdot \displaystyle \min_{1 \leq i < j \leq n} \{ ||x_i - x_j ||, || y_i - y_j || , || x_i - y_i || , || x_i - y_j || , || x_j - y_i || \}.$$
Note that $\delta(z) \neq 0$. Now, consider $\varepsilon(z) = (\delta(z),0) \in \mathbb{R}^2$. We define
\begin{align*}
G(z) = ( &  x_1 + 0 \cdot \varepsilon(z) , x_1 + 1 \cdot \varepsilon(z) , \ldots , x_1 + (k-1) \cdot \varepsilon(z) , \\
& x_2 + 0 \cdot \varepsilon(z) , x_2 +1 \cdot \varepsilon(z) , \ldots , x_2 + (k-1) \cdot \varepsilon(z) , \\
& \qquad  \qquad \qquad \vdots \\
& x_n + 0 \cdot \varepsilon(z) , x_n + 1 \cdot \varepsilon(z) , \ldots , x_n + (k-1) \cdot \varepsilon(z) , \\
&  y_1 + 0 \cdot \varepsilon(z) , y_1 + 1 \cdot \varepsilon(z) , \ldots , y_1 + (k-1) \cdot \varepsilon(z), \\
& y_2 + 0 \cdot \varepsilon(z) , y_2 + 1 \cdot \varepsilon(z) , \ldots , y_2 + (k-1) \cdot \varepsilon(z) , \\
& \qquad  \qquad \qquad \vdots \\
& y_n + 0 \cdot \varepsilon(z) , y_n + 1 \cdot \varepsilon(z) , \ldots , y_n + (k-1) \cdot \varepsilon(z)).
\end{align*}
Note that $G\colon F_{2n}(\mathbb{R}^2) \to F_{2nk}(\mathbb{R}^2)$ is a well defined continuous map. Using standard arguments of quotient topology, there exists maps $\overline{G}\colon D_{n,n}(\mathbb{R}^2) \to D_{nk,nk}(\mathbb{R}^2)$ and $\tilde{G}\colon D^2_{n,n}(\mathbb{R}^2) \to D^2_{nk,nk}(\mathbb{R}^2)$ such that the following diagram is commutative:
\begin{equation}\label{diag_inclusion_proof_1}\begin{gathered}\xymatrix{
F_{2n}(\mathbb{R}^2) \ar[d] \ar[r]^-{G} & F_{2nk}(\mathbb{R}^2) \ar[d] \\
D_{n,n}(\mathbb{R}^2) \ar[d] \ar[r]^-{\overline{G}} & D_{nk,nk}(\mathbb{R}^2)\ar[d] \\
D^2_{n,n}(\mathbb{R}^2) \ar[r]^-{\tilde{G}} & D^2_{nk,nk}(\mathbb{R}^2).
}\end{gathered}\end{equation}
Using covering spaces properties, similar that were used in \cite[Proof of Lemma~5]{GGL}, we may take $\tilde{\Psi} = \tilde{G}_\#\colon B^{2}_{n,n}(\mathbb{R}^2)=  \pi_1(D^2_{n,n}(\mathbb{R}^2))  \to  B^2_{nk,nk}(\mathbb{R}^2) = \pi_1 (D^2_{nk,nk}(\mathbb{R}^2)) $ to obtain the diagram~(\ref{diag_inclusion}).
\end{proof}

\section{Presentations}\label{sec:3}
In this section we present some general properties and results about the braid groups of surfaces, notably the braid group of the plane, that will be useful in this paper. As defined in the previous section, $B_{n}(\mathbb R^{2})$ is the fundamental group of the $n^{th}$ unordered configuration space $D_{n}(\mathbb R^{2})$, and it is usually denoted by $B_{n}$ and called the \emph{$n^{th}$ Artin braid group}. From now on, we will denote all the subgroups of $B_{n}$ without mention the surface $Y$ in the case that $Y=\mathbb R^{2}$.

For $n\geq2$, the group $B_{n}$ is generated by the elements $\sigma_{1},\ldots,\sigma_{n-1}$, subject to the following relations, called \emph{Artin relations}~\cite{A1}:
\[\left\{\begin{aligned}&\sigma_{i}\sigma_{i+1}\sigma_{i}=\sigma_{i+1}\sigma_{i}\sigma_{i+1} & \text{for all $1\leq i\leq n-2$;}\\ &\sigma_{j}\sigma_{i}=\sigma_{i}\sigma_{j}& \text{if $|i-j|\geq 2$ and $1\leq i,j\leq n-1$.}\end{aligned}\right.\] 
The elements of $B_{n}$, called braids, may be considered as a geometric representation of strings and braids of hair, and the generators of the Artin braid group (also called as \emph{Artin generators}) can be visualized in Figure~(\ref{fig:artin}), where each path is called string. Also, since each braid is defined in the interior of a disc, the inclusion $j:\mathbb D^{2}\rightarrow int(Y)$ allows us to see each braid in $B_{n}$ as a braid in $B_{n}(Y)$, where $Y$ is any surface.

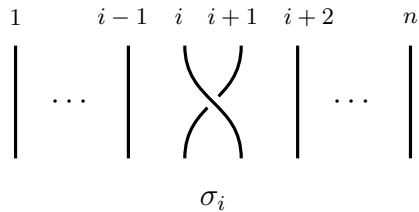
\begin{figure}[h!]
	\hfill
	\begin{tikzpicture}[scale=0.75, very thick]
		
		\foreach \k in {5}
		{\draw (\k,3) .. controls (\k,2) and (\k-1,2) .. (\k-1,1);};
		
		\foreach \k in {4}
		{\draw[white,line width=6pt] (\k,3) .. controls (\k,2) and (\k+1,2) .. (\k+1,1);
			\draw (\k,3) .. controls (\k,2) and (\k+1,2) .. (\k+1,1);};

		\foreach \k in {1,3,6,8}
		{\draw (\k,1)--(\k,3);};
		
		\foreach \k in {2,7}
		{\node at (\k,2) {$\cdots$};};
		
		\foreach \k in {1}
		{\node at (\k,3.5) {{\scriptsize$1$}};
			\node at (\k+1.9,3.5) {{\scriptsize$i-1$}};
			\node at (\k+2.9,3.52) {{\scriptsize$i$}};
			\node at (\k+3.85,3.5) {{\scriptsize$i+1$}};
			\node at (\k+5.2,3.5) {{\scriptsize$i+2$}};
			\node at (\k+7,3.5) {{\scriptsize$n$}};};
		
		\node at (4.5,0.25) {$\sigma_{i}$};
		
	\end{tikzpicture}
	\hspace*{\fill}
	\caption{The braid $\sigma_{i}$}\label{fig:artin}
\end{figure}

There is a natural correspondence that associates to each braid a permutation of $S_{n}$, considering the projection $\pi:B_{n}\rightarrow S_{n}$, $\sigma_{i}\mapsto(i,i+1)$. The \emph{$n^{th}$ Artin pure braid group} $P_{n}=\pi_{1}(F_{n}(\mathbb R^{2}))$ is isomorphic to the kernel of $\pi$. Thus, the pure braids are the elements that has the trivial permutation. The subgroup $P_{n}$ is generated by the set $\left\{A_{i,j}\,:\,1\leq i < j \leq n\right\}$~\cite[Chapter~1, Lemma~4.2]{Ha}, where \begin{equation}\label{Aij}
A_{i,j}=\sigma_{j-1}\cdots\sigma_{i+1}\sigma^{2}_{i}\sigma^{-1}_{i+1}\cdots\sigma^{-1}_{j-1}.
\end{equation}

The \emph{Garside element} (or `half-twist') $\Delta_{n}$ of $B_{n}$ is~ defined by
\begin{equation}\label{def:garside}
	\Delta_{n}=(\sigma_{1}\sigma_{2}\cdots\sigma_{n-1})(\sigma_{1}\sigma_{2}\cdots\sigma_{n-2})\cdots(\sigma_{1}\sigma_{2})(\sigma_{1}),
\end{equation}
and can be visualized in Figure~\ref{fig:garside} for $n=6$. \begin{figure}[h!]
	\hfill
	\begin{tikzpicture}[scale=0.75, very thick]
		
		\draw (5,10.5)--(5,10) .. controls (5,5.5) and (0,5.5).. (0,4);
		
		\draw[white,line width=6pt] (4,10.5)--(4,10) .. controls (4,6.5) and (0,6.5).. (0,5.5);
		\draw (4,10.5)--(4,10) .. controls (4,6.5) and (0,6.5).. (0,5.5);
		\draw[white,line width=6pt] (0,5.5).. controls (0,5) and (1,5).. (1,4);
		\draw (0,5.5).. controls (0,5) and (1,5).. (1,4);

		\draw[white,line width=6pt] (3,10.5)--(3,10) .. controls (3,7.5) and (0,7.5).. (0,6.5);
		\draw (3,10.5)--(3,10) .. controls (3,7.5) and (0,7.5).. (0,6.8);
		\draw[white,line width=6pt] (0,6.8).. controls (0,6) and (2,6).. (2,5)--(2,4);
		\draw (0,6.8).. controls (0,6) and (2,6).. (2,5)--(2,4);

		\draw[white,line width=6pt] (2,10.5)--(2,10) .. controls (2,8.5) and (0,8.5).. (0,7.8);
		\draw (2,10.5)--(2,10) .. controls (2,8.5) and (0,8.5).. (0,7.8);
		\draw[white,line width=6pt] (0,7.8).. controls (0,7) and (3,7).. (3,6)--(3,4);
		\draw (0,7.8).. controls (0,7) and (3,7).. (3,6)--(3,4);
		
		\draw[white,line width=6pt] (1,10.5) .. controls (1,9.75) and (0,9.75).. (0,9);
		\draw (1,10.5) .. controls (1,9.75) and (0,9.75).. (0,9);
		\draw[white,line width=6pt] (0,9).. controls (0,8) and (4,8).. (4,7)--(4,4);
		\draw (0,9).. controls (0,8) and (4,8).. (4,7)--(4,4);
		\draw[white,line width=6pt] (0,10.5).. controls (0,9) and (5,9).. (5,7.5)--(5,4);
		\draw (0,10.5).. controls (0,9) and (5,9).. (5,7.5)--(5,4);
	\end{tikzpicture}
	\hspace*{\fill}
	\caption{The Garside element $\Delta_{6}$ of $B_{6}$}\label{fig:garside}
\end{figure}
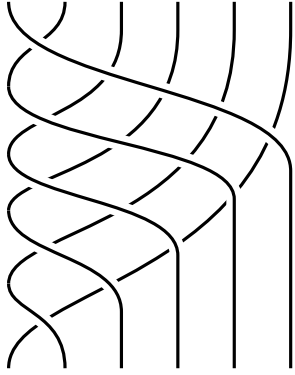
This element has an important rôle in the theory of braid group, notably in the resolution of the conjugacy problem in $B_{n}$~\cite{B2,Gar}. Using the Artin relations, it is easy to proof that 
\begin{equation}\label{Delta}
	\Delta_{n}\sigma_{i}\Delta^{-1}_{n}=\sigma_{n-i} \,\,\mbox{for\,all}\,\, 1\leq i \leq n-1.
\end{equation} 
The \emph{`full twist'} is the braid defined as the square of the half-twist and it can be written in terms of generators of $P_{n}$ as
\begin{equation}\label{full}
	\Delta^{2}_{n}=A_{1,2}(A_{1,3}A_{2,3})\cdots (A_{1,n}A_{2,n}\cdots A_{n-1,n}),  	
\end{equation} and this pure braid generates the centre of $B_{n}(Y)$ for $Y=\mathbb R^{2},\,\mathbb S^{2},\,\mathbb RP^{2}$~\cite{chow, GvB, Mu}. Those elements will be important in this paper, as we will see in the next section.

The others intermediary subgroups $B_{n,n}$ and $B^{2}_{n,n}$ of the Artin braid group also have a useful geometric representation. We can visualize the braids in $B_{n,n}$ as braids with $2n$ strings, where the first $n$ strings has as final point the first $n$ points and the last $n$  strings has as final point the last $n$ points, as in Figure~\ref{exe}(a). The group $B^{2}_{n,n}$ contains $B_{n,n}$ and also all the braids that the first $n$ strings has as final point the last $n$ points and the last $n$  strings has as final point the first $n$ points, as in Figure~\ref{exe}(b). 
\begin{figure}[h!]
	\hfill
	\begin{tikzpicture}[scale=0.7, very thick]
		
		\draw (1,5).. controls (1,3) and (5,3) .. (4,1);
		\draw (1,1)--(1,-1);
		\draw (4,4).. controls (4,3.5) and (2.5,3.5) .. (2.5,1.5);
		\draw (3,1).. controls (3,0.5) and (2,0.5).. (2,0);
		\draw (2,0).. controls (2,-0.5) and (3,-0.5).. (3,-1);

		\draw[white,line width=5pt] (4,1).. controls (3,-0.5) and (2,-0.5) .. (2,-1);
		\draw (4,1).. controls (3,-0.5) and (2,-0.5) .. (2,-1);
		\draw[white,line width=5pt] (2.5,1.5).. controls (2.5,0) and (4,0.5) .. (4,-1);
		\draw (2.5,1.5).. controls (2.5,0) and (4,0.5) .. (4,-1);
		
		\draw[white,line width=5pt] (4,3).. controls (4,2) and (1,3).. (1,1);
		\draw (4,3).. controls (4,2) and (1,3).. (1,1);
		
		\draw[white,line width=5pt] (1,2.5).. controls (1,1) and (3,2).. (3,1);
		\draw (1,2.5).. controls (1,1) and (3,2).. (3,1);
		
		\draw[white,line width=5pt] (1,5).. controls (1,3) and (5,3) .. (4,1);
		\draw (1,5).. controls (1,3) and (5,3) .. (4,1);
		\draw[white,line width=5pt] (3,5).. controls (3,4.5) and (4,4.5).. (4,4);
		\draw (3,5).. controls (3,4.5) and (4,4.5).. (4,4);
		\draw[white,line width=5pt] (4,5).. controls (3,4) and (1,4).. (1,2.5);
		\draw (4,5).. controls (3,4) and (1,4).. (1,2.5);
		\draw[white,line width=5pt] (2,5).. controls (2,4) and (4,4).. (4,3);
		\draw (2,5).. controls (2,4) and (4,4).. (4,3);
		
		\node at (2.5,-1.8) {(a) Braid in $B_{2,2}$};
		
		\foreach \k in {12,13,14,15}
		{\draw[white,line width=6pt] (\k,5) .. controls (\k,2) and (\k-4,2) .. (\k-4,-1);
			\draw (\k,5) .. controls (\k,2) and (\k-4,2) .. (\k-4,-1);};
		
		\foreach \k in {8,9,10,11}
		{\draw[white,line width=6pt] (\k,5) .. controls (\k,2) and (\k+4,2) .. (\k+4,-1);
			\draw (\k,5) .. controls (\k,2) and (\k+4,2) .. (\k+4,-1);};
		
		\node at (11.5,-1.8) {(b) Braid in $B^{2}_{4,4}$};
		
	\end{tikzpicture}
	\hspace*{\fill}
	\caption{Examples in $B_{n,n}$ and $B^{2}_{n,n}$}\label{exe}
\end{figure}
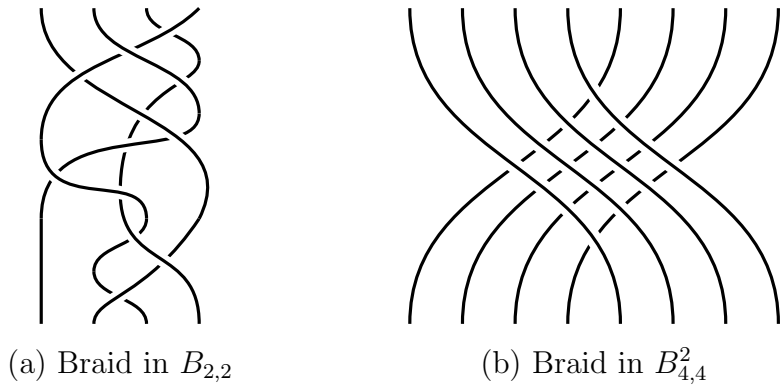

\begin{rem}\label{rem:delta}
	It is easy to see that the permutation of $\Delta_{2n}$ is $\prod^{n}_{i=1}(i,\, 2n+1-i)$, then, the element $\Delta_{2n}\in B^{2}_{n,n}\setminus B_{n,n}$. By~(\ref{seq:2}), we know that $B_{n,n}$ is a subgroup of $B^{2}_{n,n}$ of index $2$, therefore all elements in $B^{2}_{n,n} \setminus B_{n,n}$ can be written as a product of $\Delta_{2n}$ with a braid in $B_{n,n}$. 
\end{rem}

If $n=1$, we have $B_{1,1}=P_{2}$. For $n\geq2$, the following proposition give a presentation of $B_{n,n}$ that will be used in this paper, and it was based in the presentation of $B_{n,n}(\mathbb S^{2})$ given by~\cite[Proposition 10]{GG3}. 

\begin{prop}\label{prop:B_{n,n}} Let $n\geq2$, the group $B_{n,n}$ admits the following presentation:
	
	Generators: $\left\lbrace \sigma_{1},\ldots,\sigma_{n-1},\sigma_{n+1},\ldots,\sigma_{2n-1},A_{i,j}\,:\,1\leq i<j\leq 2n\right\rbrace $.
	
	\vskip.2truecm
	Relations:
	\begin{enumerate}
		\item\label{it:B1} Pure Braid relations: $A^{-1}_{r,s}A_{i,j}A_{r,s}=\left\{\begin{array}{ll}A_{i,j} \hfill i<r<s<j\mbox{\,\,\,\,or\,\,}&r<s<i<j;\\
			A_{r,j}A_{i,j}A^{-1}_{r,j}&r<i=s<j;\\
			A_{r,j}A_{s,j}A_{i,j}A^{-1}_{s,j}A^{-1}_{r,j}&i=r<s<j;\\
			A_{r,j}A_{s,j}A^{-1}_{r,j}A^{-1}_{s,j}A_{i,j}A_{s,j}A_{r,j}A^{-1}_{s,j}A^{-1}_{r,j}&r<i<s<j.\end{array}\right.$
		
		\vskip.5truecm
		\item\label{it:B2} Artin relations: $\left\{\begin{array}{lr} \sigma_{i}\sigma_{i+1}\sigma_{i}=\sigma_{i+1}\sigma_{i}\sigma_{i+1},&\quad\,1\leq i \leq n-2\quad\mbox{or}\quad n+1\leq i \leq 2n-2;\\
			\sigma_{i}\sigma_{j}=\sigma_{j}\sigma_{i},&\,|i-j|\geq2.\end{array}\right.$
		\vskip.5truecm
		\item\label{it:B3} $\sigma^{2}_{i}=A_{i,i+1},\quad\,1\leq i \leq n-1\quad\mbox{or}\quad n+1\leq i \leq 2n-1$.
		\vskip.5truecm

		\item\label{it:B4} Conjugates: $\sigma_{k}A_{i,j}\sigma^{-1}_{k}=\left\{\begin{array}{lr}A_{i,j}, & k\neq i-1, i, j-1,j;\\ 
			A_{i,j}, &k=i, j=k+1;\\
			A_{i,k+1},&k=j;\\
			A_{i,k+1}^{-1}A_{i,k}A_{i,k+1},&j=k+1, i< k;\\	
			A_{i+1,j},&i=k<j-1;\\
			A^{-1}_{k+1,j}A_{k,j}A_{k+1,j},&i=k+1.\\
		\end{array}\right.$

	\end{enumerate}
	
\end{prop}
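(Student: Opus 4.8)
The plan is to identify $B_{n,n}$ with the preimage of $S_n \times S_n$ under the projection $\pi \colon B_{2n} \to S_{2n}$ and then to read off the presentation from the resulting group extension. Concretely, $B_{n,n}$ consists of those braids in $B_{2n}$ whose underlying permutation preserves the partition $\{1,\ldots,n\} \sqcup \{n+1,\ldots,2n\}$, so that \eqref{seq:1.1} specialises (for $Y=\mathbb{R}^2$ and $m=n$) to the short exact sequence
\[
1 \to P_{2n} \to B_{n,n} \to S_n \times S_n \to 1.
\]
The generator $\sigma_n$ is absent precisely because its permutation $(n,n+1)$ fails to preserve this partition; the admissible lifts of the generators of $S_n \times S_n$ are therefore exactly $\sigma_1,\ldots,\sigma_{n-1},\sigma_{n+1},\ldots,\sigma_{2n-1}$.

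First I would apply the standard procedure for presenting a group extension: given a presentation of the kernel, a presentation of the quotient, and a choice of lifts of the quotient generators, the middle group is presented by (i) the kernel relations, (ii) each quotient relator set equal to the kernel element that its lift represents, and (iii) for every lift and every kernel generator, the corresponding conjugate rewritten as a word in the kernel generators. For the kernel I would use the classical presentation of $P_{2n}$ with generators $A_{i,j}$, $1 \le i < j \le 2n$, and the pure braid relations as defining relations (see \cite[Chapter~1, Lemma~4.2]{Ha}); this furnishes family~\eqref{it:B1}. For the quotient I would use the Coxeter presentation of $S_n \times S_n$ with generators $s_i = (i,i+1)$ for $i \neq n$, subject to $s_i^2 = 1$, the braid relations, and far-commutation.

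Translating the data (ii) and (iii) then produces the remaining families. Lifting the braid and far-commutation relations of $S_n \times S_n$ yields the Artin relations~\eqref{it:B2}; since the lifts $\sigma_i$ already satisfy the Artin relations inside $B_{2n}$, no pure-braid correction term appears. Lifting $s_i^2 = 1$ yields~\eqref{it:B3}, the correction term being $\sigma_i^2 = A_{i,i+1}$, which follows from the definition \eqref{Aij} on taking $j=i+1$. The conjugation data (iii) is exactly family~\eqref{it:B4}: I would compute $\sigma_k A_{i,j} \sigma_k^{-1}$ inside $B_{2n}$ from \eqref{Aij} and the Artin relations, the outcome splitting into the listed cases according to the position of $k$ relative to $\{i-1,i,j-1,j\}$. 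Because $P_{2n}$ is normal in $B_{2n}$, each such conjugate indeed lies in $P_{2n}$ and is expressible in the $A_{r,s}$.

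The main obstacle will be completeness rather than the correctness of any single relation. One must check that the six cases of~\eqref{it:B4} account for every pair $(\sigma_k, A_{i,j})$ with $k \neq n$ (conjugation by the inverses $\sigma_k^{-1}$ then being a formal consequence), so that any word in the generators can be normalised as a pure braid times a lift of a permutation; completeness of the presentation then reduces to the kernel and quotient presentations already invoked. The individual computations are routine manipulations with the Artin relations and parallel those carried out for the sphere in \cite[Proposition~10]{GG3}; the only structural difference is that the $\mathbb{R}^2$ case carries no surface relation, so the present list is obtained from the $B_{n,n}(\mathbb{S}^2)$ presentation by omitting the relations of spherical origin.
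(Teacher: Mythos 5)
Your proposal is correct and follows essentially the same route as the paper: both apply the standard extension method (the paper cites~\cite[Chapter~10, Proposition~1]{Jo}) to the short exact sequence~(\ref{seq:1.1}) with $m=n$ and $Y=\mathbb{R}^2$, taking the $A_{i,j}$ from~\cite[Chapter~1, Lemma~4.2]{Ha} together with the coset representatives $\sigma_i$, $i\neq n$, and extracting the same three families of relations (kernel relations, lifted quotient relations, and conjugation relations). The only cosmetic difference is that the paper justifies the lifted relations geometrically and cites~\cite[Lemma~3.1]{LiWu} for the conjugation rewriting, where you propose to carry out those routine Artin-relation computations directly.
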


\begin{proof}
	We use the extension method described in~\cite[Chapter~10, Proposition~1]{Jo} applyed in the short exact sequence~(\ref{seq:1.1}) for $m=n$ and $Y=\mathbb R^{2}$. According this method, a set of generators for $B_{n,n}$ is given by the union of all generators of $P_{2n}$  with a coset representatives of generators of $S_{n}\times S_{n}$. By~\cite[Chapter~1, Lemma~4.2]{Ha}, we may choose {$\{ A_{i,j} \, : \, 1 \leq i < j \leq 2n \} $} as the set of generators of $P_{2n}$. Since the transpositions generates the symetric group, we may choose $\sigma_{i}$, $i=1,\ldots,n-1,n+1,\ldots,2n-1$ as the remain generators of $B_{n,n}$. There are three types of relations. The first type is obtained by taking all the relations of the generators of $P_{2n}$ and we use again~\cite[Chapter~1, Lemma~4.2]{Ha} to obtain the relations~(\ref{it:B1}). The second type is given by rewriting the relations of $S_{n}\times S_{n}$ in terms of the chosen cosets representatives and expressing the corresponding element as words in terms of the generators of $P_{2n}$. This step can be view in a simple geometric way, which gives relations~(\ref{it:B2}) and~(\ref{it:B3}). Finally the third type is obtained by writing the conjugates of the generators of the kernel by the coset representatives as words written entirely in terms of the generators of the kernel. This rewriting process is described in details in ~\cite[Lemma~3.1]{LiWu}, which gives relation~(\ref{it:B4}). 
\end{proof}

\begin{rem}\label{rem:Aij} Using~(\ref{Aij}), the elements $A_{i,j}\in B_{n,n}$, $1\leq i <j\leq n$ or $n+1\leq i<j\leq2n$, can be written as product of $\sigma_{k}$, $k=1,\ldots, n-1,n+1,\ldots,2n-1$. If $1\leq i \leq n$ and $n+1\leq j\leq2n$, the braid $A_{i,j}\in B_{n,n}$ can be written as \begin{equation}\label{AA2}
		A_{i,j}=(\sigma_{j-1}\cdots\sigma_{n+1})(\sigma^{-1}_{i}\cdots\sigma^{-1}_{n-1})A_{n,n+1}(\sigma_{i}\cdots\sigma_{n-1})(\sigma^{-1}_{n+1}\cdots\sigma^{-1}_{j-1}).
\end{equation}  \end{rem}

\section{Proof of~\reth{result1}}\label{sec:res1}

In this section we prove~\reth{result1} using a sequence of propositions. Recall that $\tau$ denote the antipodal map on $\mathbb{S}^2$. First, note that the triple $(\mathbb{S}^2,\tau;\mathbb{S}^2)$ does not have $1$-BUP, because the identity map of $\mathbb{S}^2$ does not collapses antipodal points. This fact with the classical Borsuk-Ulam theorem and \cite[Theorem~2 and Proposition~4]{GG0} is summarized in the following propositon:

\begin{prop}\label{propA}
Let $Y$ be $\mathbb{R}^2$ or a closed surface. Then the triple $(\mathbb{S}^2,\tau;Y)$ has the $1$-BUP if, and only if, $Y \neq \mathbb{S}^2$.
\end{prop}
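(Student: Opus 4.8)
The plan is to reduce the $1$-BUP for the triple $(\mathbb{S}^2,\tau;Y)$ to the classical coincidence statement for single-valued maps: since every $1$-valued map is split and single-valued, the triple has the $1$-BUP if and only if every continuous $f\colon \mathbb{S}^2 \to Y$ admits a point $x$ with $f(x) = f(\tau(x)) = f(-x)$. The implication ``$Y = \mathbb{S}^2 \Rightarrow$ no $1$-BUP'' is immediate and already recorded above, since the identity map never collapses a pair of antipodal points, so the work is to prove that every such $f$ has a coincidence whenever $Y \neq \mathbb{S}^2$. Because $\pi_1(\mathbb{S}^2)$ is trivial, the lifting criterion guarantees that any $f$ factors as $f = p \circ \tilde{f}$ through the universal covering $p\colon \tilde{Y} \to Y$, and a coincidence of $\tilde{f}$ always descends to one of $f$; this reduces every case to a statement about the lift.

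If $Y = \mathbb{R}^2$ the assertion is exactly the classical Borsuk-Ulam theorem in dimension two. If $Y$ is a closed surface different from $\mathbb{S}^2$ and $\mathbb{RP}^2$, then $Y$ is aspherical and its universal cover $\tilde{Y}$ is homeomorphic to $\mathbb{R}^2$; applying the classical Borsuk-Ulam theorem to $\tilde{f}\colon \mathbb{S}^2 \to \mathbb{R}^2$ produces $x$ with $\tilde{f}(x) = \tilde{f}(-x)$, and hence $f(x) = f(-x)$. Thus these cases cost nothing beyond the classical theorem and the triviality of $\pi_1(\mathbb{S}^2)$.

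The remaining and genuinely delicate case is $Y = \mathbb{RP}^2$, where the universal cover is again $\mathbb{S}^2$ and the preceding reduction only turns the problem into: every $g = \tilde{f}\colon \mathbb{S}^2 \to \mathbb{S}^2$ satisfies $g(x) = \pm g(-x)$ for some $x$, since $p(g(x)) = p(g(-x))$ is equivalent to $g(x) = \pm g(-x)$. I would argue by contradiction: assuming $g(x) \neq \pm g(-x)$ for all $x$, both vectors $g(x)+g(-x)$ and $g(x)-g(-x)$ are nonzero, so one may define the unit maps $s(x) = (g(x)+g(-x))/\lVert g(x)+g(-x)\rVert$ and $d(x) = (g(x)-g(-x))/\lVert g(x)-g(-x)\rVert$, which are orthogonal because $\lVert g(x)\rVert = \lVert g(-x)\rVert = 1$. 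Here $s$ is even ($s(-x)=s(x)$) and $d$ is odd ($d(-x)=-d(x)$), and writing $g(x) = \cos\theta(x)\, s(x) + \sin\theta(x)\, d(x)$ with $\theta(x) \in (0,\pi/2)$ lets me linearly deform $\theta$ either to $0$ or to $\pi/2$, exhibiting $g \simeq s$ and $g \simeq d$ simultaneously. But an even self-map of $\mathbb{S}^2$ factors through $\mathbb{RP}^2$ and hence has degree $0$, whereas an odd self-map has odd degree by Borsuk's theorem; so $\deg g$ would be both $0$ and odd, a contradiction. I expect this $\mathbb{RP}^2$ case to be the main obstacle, precisely because the universal cover fails to be contractible and the classical Borsuk-Ulam theorem does not apply directly; the even/odd splitting of $g$ together with the degree dichotomy is the device that resolves it. Alternatively, one may invoke \cite[Theorem~2 and Proposition~4]{GG0} to dispatch all the closed-surface cases at once.
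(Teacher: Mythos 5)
Your proof is correct, and it takes a genuinely more self-contained route than the paper's. The paper disposes of Proposition~\ref{propA} almost entirely by citation: the identity map of $\mathbb{S}^2$ shows the failure for $Y=\mathbb{S}^2$, the classical Borsuk-Ulam theorem handles $Y=\mathbb{R}^2$, and all closed surfaces $Y\neq\mathbb{S}^2$ are referred to \cite[Theorem~2 and Proposition~4]{GG0}. You instead prove the closed-surface cases directly: since $\pi_1(\mathbb{S}^2)$ is trivial, every $f$ lifts through the universal cover, so for $Y\neq\mathbb{S}^2,\mathbb{RP}^2$ asphericity gives $\tilde{Y}\cong\mathbb{R}^2$ and the classical theorem applies to the lift, while for $Y=\mathbb{RP}^2$ your even/odd decomposition of the lift $g$ is sound: $s$ and $d$ are orthonormal because $\|g(x)\|=\|g(-x)\|=1$, the coefficients $(\cos\theta(x),\sin\theta(x))$ lie on the unit circle by the parallelogram law, the linear deformation of $\theta$ stays on $\mathbb{S}^2$ since $s(x),d(x)$ are orthonormal, and the degree dichotomy (even maps factor through $\mathbb{RP}^2$, which has trivial second integral homology, hence have degree $0$; odd maps have odd degree by Borsuk's theorem) yields the contradiction. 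What your route buys is independence from the classification in~\cite{GG0}, at the cost of length and of invoking Borsuk's odd-degree theorem; what the paper's route buys is brevity and uniformity, since~\cite{GG0} covers all closed surfaces at once and is already a standing reference elsewhere in the paper. You also flag the citation as an alternative, so the two arguments coincide on the easy cases and differ only in how the closed-surface cases are discharged.
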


With the exception of the case $Y=\mathbb S^{2}$, the next result extends the previous one and the prove uses the same idea of~\cite[Proof of Theorem~2]{Sc1}.

\begin{prop}\label{propB}
Let $Y$ be $\mathbb{R}^2$ or a closed surface different of $\mathbb{S}^2$. Then the triple $(\mathbb{S}^2,\tau;Y)$ has the $n$-BUP if $n \geq 2$.	
\end{prop}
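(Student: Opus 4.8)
The plan is to leverage \reth{result1}'s Proposition~\ref{propA}, which already establishes the $1$-BUP for $(\mathbb{S}^2,\tau;Y)$ when $Y \neq \mathbb{S}^2$, and to promote this to the $n$-BUP for all $n \geq 2$ by adapting Schirmer's argument from~\cite[Proof of Theorem~2]{Sc1}. The essential observation is that $\mathbb{S}^2$ is simply connected, so any $n$-valued map $\phi \colon \mathbb{S}^2 \multimap Y$ is automatically split: the single map $\Phi \colon \mathbb{S}^2 \to D_n(Y)$ lifts through the covering $F_n(Y) \to D_n(Y)$ because $\pi_1(\mathbb{S}^2) = 1$ maps trivially into $\pi_1(D_n(Y)) = B_n(Y)$. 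Consequently, for the $2$-sphere there are no non-split $n$-valued maps at all, and the $n$-non-split BUP holds vacuously; the whole content reduces to proving the $n$-split BUP.

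First I would fix a split map $\phi = \{ f_1, \ldots, f_n \} \colon \mathbb{S}^2 \multimap Y$ with $f_i \colon \mathbb{S}^2 \to Y$ continuous, and argue by contradiction: suppose $\phi$ has no Borsuk-Ulam coincidence, i.e.\ $\phi(x) \cap \phi(\tau(x)) = \emptyset$ for every $x$. Following Schirmer, the plan is to build from $\phi$ a genuine single-valued map to which the classical hypothesis on $(\mathbb{S}^2,\tau;Y)$ (namely the $1$-BUP guaranteed by Proposition~\ref{propA}) can be applied. The standard device is to consider one of the coordinate functions, say $f_1 \colon \mathbb{S}^2 \to Y$; since $(\mathbb{S}^2,\tau;Y)$ has the $1$-BUP, there exists a point $x_0$ with $f_1(x_0) = f_1(\tau(x_0))$. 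But $f_1(x_0) \in \phi(x_0)$ and $f_1(\tau(x_0)) \in \phi(\tau(x_0))$, so this common value lies in $\phi(x_0) \cap \phi(\tau(x_0))$, contradicting the assumption that $\phi$ has no Borsuk-Ulam coincidence. This is precisely the mechanism noted in the introduction: "if the triple $(X,\tau;Y)$ has the $1$-BUP, then any split $n$-valued map has Borsuk-Ulam coincidence."

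The main obstacle, and the point requiring care, is the selection of a globally defined single-valued branch $f_1$. For a general domain $X$ a split decomposition need not exist, but on $\mathbb{S}^2$ it always does by simple-connectedness, so each $f_i$ is honestly a continuous map $\mathbb{S}^2 \to Y$ and the argument above goes through verbatim. I would state explicitly that the simple-connectivity of $\mathbb{S}^2$ forces every $n$-valued map to split, which simultaneously disposes of the non-split case (vacuously) and licenses the coordinate-function argument in the split case. Finally I would conclude that, since both the $n$-split and $n$-non-split BUP hold for $(\mathbb{S}^2,\tau;Y)$ whenever $Y \neq \mathbb{S}^2$, the triple has the full $n$-BUP for all $n \geq 2$, completing the proof.
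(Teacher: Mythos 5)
Your proof is correct and follows essentially the same route as the paper: the paper likewise observes that every $n$-valued map on $\mathbb{S}^2$ splits (citing~\cite[Lemma~13]{GG1}, which encapsulates exactly the lifting argument through $F_n(Y) \to D_n(Y)$ that you spell out via simple-connectedness) and then applies the $1$-BUP of Proposition~\ref{propA} to a single branch $f_1$ to produce the coincidence. Your explicit remark that the $n$-non-split BUP holds vacuously is a nice clarification, but it is the same argument, just stated by contradiction rather than directly.
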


\begin{proof}
Let $\phi\colon\thinspace \mathbb{S}^2 \multimap Y$ be $n$-valued map. Then $\phi$ is split by \cite[Lemma~13]{GG1} which means that there exists maps $f_1,f_2,\ldots,f_n\colon\thinspace \mathbb{S}^2 \to Y$ such that  $\phi(x) = \{ f_1(x),f_2(x),\ldots,f_n(x)  \}$ for all $x \in \mathbb{S}^2$. By Proposition~\ref{propA}, there exists a point $x \in \mathbb{S}^2$ such that $f_1(x)=f_1(-x)$ and therefore $\phi(x)\cap \phi(-x) \neq \emptyset$ which ends the proof.
\end{proof}

Note that \reth{result1} is a summary of Proposition \ref{propA} and \ref{propB} and the next result.

\begin{prop}\label{propC}
The triple $(\mathbb{S}^2,\tau;\mathbb{S}^2)$ has the $n$-BUP for all $n \geq 2$.	
\end{prop}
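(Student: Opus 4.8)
The plan is to use \reth{diagram} to reduce the statement to an algebraic question about torsion in the sphere braid group $B_{2n}(\mathbb{S}^2)$. With $X=\mathbb{S}^2$ and $X_\tau=\mathbb{RP}^2$, the short exact sequence~\eqref{seq:theta} has $\pi_1(\mathbb{S}^2)=1$ and $\pi_1(\mathbb{RP}^2)=\mathbb{Z}_2$, so $\theta_\tau\colon\mathbb{Z}_2\to\mathbb{Z}_2$ is an isomorphism and $\ker{\theta_\tau}=\{1\}$. In particular $\Psi(\ker{\theta_\tau})=\{1\}\subset P_{2n}(\mathbb{S}^2)$ for every homomorphism $\Psi$, so the non-split criterion of \reth{diagram} can never be satisfied; equivalently, every $n$-valued map from the simply connected space $\mathbb{S}^2$ is split (as in the proof of Proposition~\ref{propB}), so the $n$-non-split BUP holds vacuously. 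It therefore remains only to establish the $n$-split BUP.

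For the split case I would apply the equivalence given by \reth{diagram}, whose case~(a) applies here since $\dim\mathbb{S}^2=2$ and $Y=\mathbb{S}^2$: the $n$-split BUP fails for $(\mathbb{S}^2,\tau;\mathbb{S}^2)$ if and only if there is a homomorphism $\Psi\colon\pi_1(\mathbb{RP}^2)=\mathbb{Z}_2\to B^2_{n,n}(\mathbb{S}^2)$ with $\pi\circ\Psi=\theta_\tau$. Letting $\gamma$ denote the generator of $\pi_1(\mathbb{RP}^2)$, such a $\Psi$ is the same datum as an element $\beta=\Psi(\gamma)\in B^2_{n,n}(\mathbb{S}^2)$ with $\beta^2=1$ and $\pi(\beta)=\overline{1}$. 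Since $\pi(\beta)=\overline{1}$ forces $\beta\neq 1$, this is precisely an element of order $2$ lying in $B^2_{n,n}(\mathbb{S}^2)\setminus B_{n,n}(\mathbb{S}^2)$. Hence the proposition reduces to showing that $B^2_{n,n}(\mathbb{S}^2)\setminus B_{n,n}(\mathbb{S}^2)$ contains no involution.

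To prove this, I would regard $B^2_{n,n}(\mathbb{S}^2)$ as a subgroup of $B_{2n}(\mathbb{S}^2)$ via the covering $D^2_{n,n}(\mathbb{S}^2)\to D_{2n}(\mathbb{S}^2)$, so that any order-$2$ element of $B^2_{n,n}(\mathbb{S}^2)$ is in particular an involution of $B_{2n}(\mathbb{S}^2)$. I would then invoke the classification of torsion in sphere braid groups: for $2n\geq 3$, every finite-order element of $B_{2n}(\mathbb{S}^2)$ is conjugate to a power of one of $\sigma_1\cdots\sigma_{2n-1}$, $\sigma_1\cdots\sigma_{2n-1}\sigma_1$ or $\sigma_1\cdots\sigma_{2n-1}\sigma_1\sigma_2$, and consequently the full twist $\Delta_{2n}^2$ is the unique element of order $2$, and it is central. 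Since $n\geq 2$ gives $2n\geq 4$, this applies. Finally, $\Delta_{2n}^2$ is a pure braid, so $\Delta_{2n}^2\in P_{2n}(\mathbb{S}^2)\subset B_{n,n}(\mathbb{S}^2)$ and $\pi(\Delta_{2n}^2)=\overline{0}$; thus the unique involution of $B_{2n}(\mathbb{S}^2)$ lies inside $B_{n,n}(\mathbb{S}^2)$ and none maps to $\overline{1}$. Therefore no admissible $\Psi$ exists, the $n$-split BUP holds, and together with the vacuous non-split case this yields the $n$-BUP for all $n\geq 2$.

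The main obstacle is the torsion input. The argument rests entirely on the sphere-specific fact that $\Delta_{2n}^2$ is the only involution of $B_{2n}(\mathbb{S}^2)$; this is exactly what breaks the analogy with $B_{2n}(\mathbb{R}^2)$, which is torsion-free, and it is the single step where soft covering-space reasoning must give way to the finer structure theory of sphere braid groups.
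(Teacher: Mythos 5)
Your proposal is correct and takes essentially the same approach as the paper: both reduce, via Theorem~\ref{th:diagram}, to the nonexistence of an order-two element of $B^2_{n,n}(\mathbb{S}^2)\setminus B_{n,n}(\mathbb{S}^2)$, which is ruled out because the full twist $\Delta_{2n}^2$ is the unique element of order two in $B_{2n}(\mathbb{S}^2)$ and it lies in $P_{2n}(\mathbb{S}^2)\subset B_{n,n}(\mathbb{S}^2)$. Your only deviations are cosmetic: you treat the non-split case separately (vacuously, since $\ker{\theta_\tau}$ is trivial), and you derive the uniqueness of the involution from Murasugi's torsion classification, whereas the paper cites Gillette--Van Buskirk directly for that fact.
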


\begin{proof}
We argue by contradiction. Suppose that $(\mathbb{S}^2,\tau;\mathbb{S}^2)$ does not have the $n$-BUP for $n \geq 2$. By \reth{diagram} and the fact the quotient of $\mathbb{S}^2$ by the antipodal involution is $\mathbb{RP}^2$, we have the following commutative diagram:
$$\xymatrix{
	\pi_1(\mathbb{RP}^2) \cong \mathbb{Z}_2 \ar@{.>}[rr]^{\Psi} \ar[rd]_{\theta_\tau \cong {\rm Id}} && B^{2}_{n,n}(\mathbb{S}^2) \ar[ld]^{\pi} && \\
	& \mathbb{Z}_2 .& & &
}$$
So, $\Psi$ maps the generator of $\pi_1(\mathbb{RP}^2)$ in a braid $b \in B_{2n}(\mathbb{S}^2)$ which has order two and it belongs to $B^2_{n,n}(\mathbb{S}^2) \setminus B_{n,n}(\mathbb{S}^2)$. However, this is an absurd because the full twist $\Delta_{2n}^2$  is the only element of order two of $B_{2n}(\mathbb{S}^2)$ by~\cite[Theorem 3.5]{GvB} and $\Delta_{2n}^2 \in P_{2n} (\mathbb{S}^2) \subset B_{n,n}(\mathbb{S}^2)$.
\end{proof}

\begin{rem}
We also could prove Proposition~\ref{propB} in a similar way as Proposition~\ref{propC}, because $B_m(Y)$ is torsion-free if $Y$ is $\mathbb{R}^2$ or a closed surface different of $\mathbb{S}^2$ and for $\mathbb{RP}^2$ by \cite[Theorem~8]{FN}, and if $Y=\mathbb RP^{2}$, then the full twist $\Delta^{2}_{2n}$ is also the only element of order two of $B_{2n}(\mathbb{RP}^2)$ by~\cite[Proposition 23]{GG4}.
\end{rem}

\section{Proof of~\reth{result2}}\label{sec:res2}

The main goal of this section is to prove \reth{result2}. Its proof will follow from Propositions \ref{prop1}-\ref{prop3}. In all of the enunciates $X$ denotes a closed surface different of $\mathbb{S}^2$ that admits a free involution $\tau$, the orbit space is denoted by $X_\tau$ and $\theta_\tau\colon \pi_1(X_\tau) \to \mathbb{Z}_2$ is the homomorphism defined in~(\ref{seq:theta}). Recall that $X_\tau$ is also a closed surface. In what follows, we will distinguish the following three possibilities, and in each case, we define an element $\delta \in \pi_1(X_\tau)$:

\begin{enumerate}[(I)]

\item\label{it:I} $X_\tau$ is a closed orientable surface of genus $m\geq 1$, $$\pi_1(X_\tau) = \left\langle a_1, a_2 , \ldots , a_{2m-1} , a_{2m} \, : \, [a_1,a_2] \ldots [a_{2m-1},a_{2m}] = 1 \right\rangle \text{ and } \delta = 1.$$

\item\label{it:II} $X_\tau$ is a closed non-orientable surface of genus $2m+2$, $m\geq 0$, $$\pi_1(X_\tau) = \left\langle u , v , a_1, a_2 , \ldots , a_{2m-1} , a_{2m} \, : \, uvuv^{-1} [a_1,a_2] \ldots  [a_{2m-1},a_{2m}] = 1 \right\rangle \text{ and } \delta = u.$$

\item\label{it:III} $X_\tau$ is a closed non-orientable surface of genus $2m+1$, $m\geq 1$, $$\pi_1(X_\tau) = \left\langle c , a_1, a_2 , \ldots , a_{2m-1} , a_{2m} \, : \, c^2 [a_1,a_2] \ldots [a_{2m-1},a_{2m}] = 1 \right\rangle \text{ and } \delta = c.$$

\end{enumerate}

\begin{rem}\label{rm:delta}
Recall that $\hat{\theta_\tau}\colon\thinspace H_1(X_\tau) \to \mathbb{Z}_2$ is the factorization of the homomorphism $\theta_\tau$ to the abelianizition of $\pi_1(X_\tau)$ and $\hat{\delta}$ is the only-non trivial element of order two of $H_1(X_\tau)$ if $X_\tau$ is non-orientable. So, with the notation of (\ref{it:II}) and (\ref{it:III}) above, we have that $\hat{\theta_\tau}(\hat{\delta}) = \theta_\tau(\delta)$.
\end{rem}

Let $n$ be a positive integer. With the notation of (\ref{it:I})-(\ref{it:III}), in order to estabilish \reth{result2}, by Remark~\ref{rm:delta}, it suffices to prove the following propositions:

\begin{prop}\label{prop1}
If $\theta_\tau(\delta) = \overline{0}$, then the triple $(X,\tau;\mathbb{R}^2)$ does not have the $n$-split BUP and the $n$-non-split BUP.
\end{prop}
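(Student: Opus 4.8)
The plan is to use Theorem~\ref{th:diagram} to reduce the failure of both the $n$-split and $n$-non-split BUP to the construction of a single homomorphism $\Psi\colon \pi_1(X_\tau) \to B^2_{n,n}(\mathbb{R}^2)$ making the diagram~(\ref{diagram}) commute, and then to exhibit two such homomorphisms explicitly. Since $Y = \mathbb{R}^2$ is different from $\mathbb{S}^2$ and $\mathbb{RP}^2$, case~(c) of Theorem~\ref{th:diagram} applies, so I only need the algebraic data: a $\Psi$ with $\Psi(\ker \theta_\tau) \subset P_{2n}(\mathbb{R}^2)$ to defeat the $n$-split BUP, and a $\Psi$ with $\Psi(\ker \theta_\tau) \not\subset P_{2n}(\mathbb{R}^2)$ to defeat the $n$-non-split BUP. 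The hypothesis $\theta_\tau(\delta) = \overline{0}$ is what makes both constructions possible simultaneously, and this is the key place where the case assumption enters.

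**First I would** handle the split case, which is the easier one. The trivial homomorphism sending every generator of $\pi_1(X_\tau)$ to the identity does not make the diagram commute unless $\theta_\tau$ itself is trivial, so instead I would define $\Psi$ to factor through $\theta_\tau$: pick any element $b_0 \in B^2_{n,n}(\mathbb{R}^2) \setminus B_{n,n}(\mathbb{R}^2)$ with $\pi(b_0) = \overline{1}$ — for instance the Garside-type element $\Delta_{2n}$, which lies in $B^2_{n,n}\setminus B_{n,n}$ by Remark~\ref{rem:delta} — and send each generator $g$ to $b_0^{\,k}$ where $\overline{k} = \theta_\tau(g)$. Commutativity of~(\ref{diagram}) is then automatic because $\pi \circ \Psi = \theta_\tau$ by construction. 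The condition $\Psi(\ker\theta_\tau)\subset P_{2n}(\mathbb{R}^2)$ holds because any generator in $\ker\theta_\tau$ maps to the identity, and one must only check that the defining relation of $\pi_1(X_\tau)$ is respected, which follows since the relator is always a product lying in $\ker\theta_\tau$ (its image under $\theta_\tau$ is $\overline{0}$, the relator being a commutator product, a square, or $uvuv^{-1}$) and hence maps to a power of $b_0^{\,2} = \Delta_{2n}^2$, a central pure braid. Here is exactly where $\theta_\tau(\delta)=\overline{0}$ matters: in cases~(\ref{it:II}) and~(\ref{it:III}) the relator involves $\delta$ (as $u$ or $c$), so I need $\theta_\tau(\delta)=\overline 0$ to guarantee the relator still lands in $\ker\theta_\tau$ and the assignment is consistent.

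**The non-split case** requires a $\Psi$ whose image of $\ker\theta_\tau$ escapes $P_{2n}(\mathbb{R}^2)$. The idea is to send one generator lying in $\ker\theta_\tau$ to a non-pure braid in $B_{n,n}(\mathbb{R}^2)$ — for example a single Artin generator $\sigma_1 \in B_{n,n}(\mathbb{R}^2)$, which has nontrivial permutation hence is not in $P_{2n}$ — while arranging that $\pi\circ\Psi = \theta_\tau$ and the group relation is preserved. Concretely I would keep $\Psi(\delta)$ (or one of the $a_i$) equal to $\sigma_1$ and set the images of the remaining generators to match $\theta_\tau$, choosing central or commuting elements so that the single relator is satisfied. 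Since $\sigma_1 \in B_{n,n}(\mathbb{R}^2) = \ker\pi$, we have $\pi(\sigma_1)=\overline{0}=\theta_\tau(\delta)$, so commutativity is preserved, while $\sigma_1 \notin P_{2n}(\mathbb{R}^2)$ gives $\Psi(\ker\theta_\tau)\not\subset P_{2n}(\mathbb{R}^2)$ as required.

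**The main obstacle** will be verifying that each candidate $\Psi$ respects the single defining relation of $\pi_1(X_\tau)$ while simultaneously putting a genuinely non-pure braid into the image of $\ker\theta_\tau$ in the non-split construction. The relation constrains the generator images to satisfy a product equation in $B^2_{n,n}(\mathbb{R}^2)$, and I must choose the images carefully — typically assigning all but one generator to powers of $\Delta_{2n}$ or to commuting/central elements so the relator collapses — to avoid an inconsistency. I expect to use the centrality of the full twist $\Delta_{2n}^2$ and Remark~\ref{rem:delta} to reduce the relator to something manifestly trivial or central, and the freedom in choosing the remaining images (up to powers of the central full twist) should suffice; the uniform treatment of the three presentations~(\ref{it:I})--(\ref{it:III}) is mostly bookkeeping once the $\theta_\tau(\delta)=\overline{0}$ hypothesis is in hand.
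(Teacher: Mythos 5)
Your split-case construction is sound and is essentially the paper's own: the paper sends each generator $g$ to $\omega_n^{i(g)}$, where $\omega_n \in B^2_{n,n}\setminus B_{n,n}$ is a block-swap braid, and your choice $\Delta_{2n}^{i(g)}$ works just as well. One correction to the justification, though: it is not enough that the relator maps to ``a power of $\Delta_{2n}^{2}$, a central pure braid'' --- a homomorphism on a presented group is well defined only if the relator maps to the \emph{identity}. Under your assignment the relator of case~(\ref{it:II}) (resp.~(\ref{it:III})) maps to $\Delta_{2n}^{2i(u)}$ (resp.\ $\Delta_{2n}^{2i(c)}$), and since $B_{2n}$ is torsion-free this braid is trivial exactly when the exponent is zero; that is where the hypothesis $\theta_\tau(\delta)=\overline{0}$ enters, not in guaranteeing that the relator lies in $\ker{\theta_\tau}$ (which is automatic, $\theta_\tau$ being a well-defined homomorphism). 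The computation does close, so this half is correct after rewording.

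The non-split half has a genuine gap: the construction you describe cannot be completed. First, it presupposes a generator of $\pi_1(X_\tau)$ lying in $\ker{\theta_\tau}$, and in case~(\ref{it:I}) there may be none (e.g.\ the torus with $\theta_\tau(a_1)=\theta_\tau(a_2)=\overline{1}$). Second, and fatally, in cases~(\ref{it:II}) and~(\ref{it:III}) the relator cannot be satisfied along your lines: if the remaining generators go to central or mutually commuting elements so that the commutator part collapses, the relator forces $\sigma_1^2=1$ in case~(\ref{it:III}) (impossible, $B_{2n}$ is torsion-free) and $\Psi(v)\,\sigma_1\,\Psi(v)^{-1}=\sigma_1^{-1}$ in case~(\ref{it:II}) (impossible, since conjugation preserves the exponent sum: the abelianization of $B_{2n}$ is $\mathbb{Z}$ with every $\sigma_i \mapsto 1$). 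The idea your sketch is missing is that the element of $\ker{\theta_\tau}$ whose image escapes $P_{2n}$ need not be a generator. The paper sends \emph{every} generator $g$ to $(\sigma_1\omega_n)^{i(g)}$, a power of one fixed element of $B^2_{n,n}\setminus B_{n,n}$; then all images commute, the relator collapses to $(\sigma_1\omega_n)^{2i(\delta)}=1$ by the hypothesis, and the diagram~(\ref{diagram}) commutes. Taking a generator $g_1$ with $\theta_\tau(g_1)=\overline{1}$ (which exists since $\theta_\tau$ is surjective), the element $g_1^2$ lies in $\ker{\theta_\tau}$ and $\Psi'(g_1^2)=(\sigma_1\omega_n)^2$ has permutation $(1\,2)(n+1\;n+2)\neq \mathrm{id}$, hence lies outside $P_{2n}$, as required by Theorem~\ref{th:diagram}. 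Note finally that any $\sigma_1$-based construction needs $n\geq 2$ (for $n=1$ one has $B_{1,1}=P_2$), and the paper disposes of $n=1$ separately via~\cite[Theorem~2.5]{G}.
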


\begin{prop}\label{prop2}
If $\theta_\tau(\delta) = \overline{1}$ and $n$ is even, then the triple $(X,\tau;\mathbb{R}^2)$ has the $n$-split BUP, but it does not have the $n$-non-split BUP.
\end{prop}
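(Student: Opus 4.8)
The plan is to exploit Theorem~\ref{th:diagram} in both directions, together with the "multiplication" homomorphism $\tilde{\Psi}$ of Theorem~\ref{th:aumentak}, so that it suffices to treat the base case $n=2$ and then inflate it to arbitrary even $n$. Since $Y=\mathbb{R}^2$ is different from $\mathbb{S}^2$ and $\mathbb{RP}^2$, case~(c) of Theorem~\ref{th:diagram} applies with no dimension restriction, so the existence of a factorization $\Psi\colon \pi_1(X_\tau)\to B^2_{n,n}(\mathbb{R}^2)$ through $\pi$ with $\Psi(\ker\theta_\tau)\subset P_{2n}$ (resp.\ $\not\subset P_{2n}$) is \emph{equivalent} to the failure of the $n$-split (resp.\ $n$-non-split) BUP. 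Thus the whole proposition reduces to two algebraic tasks: (i) constructing $\Psi$ realising the non-split case (to show $n$-non-split BUP fails), and (ii) showing that \emph{no} such $\Psi$ can land $\ker\theta_\tau$ inside $P_{2n}$ (to show $n$-split BUP holds).

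For task~(i), I would first handle $n=2$ by hand. Using the presentations in (\ref{it:II})--(\ref{it:III}) and the presentation of $B_{n,n}$ from Proposition~\ref{prop:B_{n,n}}, I would send the distinguished generator $\delta$ (with $\theta_\tau(\delta)=\overline{1}$, so $\Psi(\delta)$ must lie in $B^2_{n,n}\setminus B_{n,n}$) to a suitable element involving $\Delta_{2n}$, as in Remark~\ref{rem:delta}, and send the generators lying in $\ker\theta_\tau$ to carefully chosen pure or non-pure braids so that the single defining relation of $\pi_1(X_\tau)$ is satisfied. The key point is to arrange $\Psi(\ker\theta_\tau)\not\subset P_{2n}$: since the remaining generators abelianize correctly and the target $B^2_{n,n}(\mathbb{R}^2)$ is torsion-free (braid groups of $\mathbb{R}^2$ are torsion-free), the main work is verifying the surface relation holds in $B^2_{n,n}(\mathbb{R}^2)$, which is a finite braid computation. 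To pass from $n=2$ to general even $n=2k$, I would compose the $n=2$ factorization $\Psi_2$ with $\tilde{\Psi}\colon B^2_{2,2}(\mathbb{R}^2)\to B^2_{2k,2k}(\mathbb{R}^2)$ from Theorem~\ref{th:aumentak}; because the diagram~(\ref{diag_inclusion}) commutes with the projections $\pi_i$, the composite $\tilde{\Psi}\circ\Psi_2$ still factors $\theta_\tau$ through $\pi$, and since $\tilde{\Psi}$ preserves the property of not lying in the pure subgroup, this yields the required non-split $\Psi$ for every even $n$.

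For task~(ii) — the heart of the proposition — I must show the $n$-split BUP \emph{holds}, i.e.\ there is \emph{no} homomorphism $\Psi$ with $\Psi(\ker\theta_\tau)\subset P_{2n}$ making (\ref{diagram}) commute. I would argue by contradiction: such a $\Psi$ would send the whole index-two subgroup $\ker\theta_\tau$ into $P_{2n}(\mathbb{R}^2)$ while sending $\delta$ into $B^2_{n,n}\setminus B_{n,n}$, so $b:=\Psi(\delta)$ satisfies $b\notin B_{n,n}$ and hence $b\notin P_{2n}$, yet the defining relation forces a strong constraint on $b^2$ (in case~(\ref{it:III}), $\Psi(\delta)^2=\Psi(c^2)=\Psi([a_2,a_1]\cdots)\in P_{2n}$ automatically, and in case~(\ref{it:II}) a similar relation controls $\Psi(u)$). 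The plan is to push this into the abelianization or into a well-chosen permutation/homological invariant of $B^2_{n,n}(\mathbb{R}^2)$ that distinguishes the split situation: the condition $n$ even is precisely what should obstruct the abelianized relation from being solvable with $\ker\theta_\tau$ pure, whereas for $n$ odd (Proposition~\ref{prop3}) it becomes solvable. Concretely I expect to use the map $B^2_{n,n}(\mathbb{R}^2)\to\mathbb{Z}\times\mathbb{Z}_2$ recording total exponent sum and the $\pi$-coset, and to show the surface relation together with $\theta_\tau(\delta)=\overline{1}$ forces a parity contradiction when $n$ is even.

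The main obstacle will be task~(ii): ruling out \emph{all} split factorizations, not just exhibiting one, requires identifying the right abelian invariant of $B^2_{n,n}(\mathbb{R}^2)$ on which the parity of $n$ interacts with the relation $\theta_\tau(\delta)=\overline{1}$. I anticipate this is where the even/odd dichotomy between Propositions~\ref{prop2} and~\ref{prop3} genuinely originates, and verifying that the chosen invariant is both well-defined on $B^2_{n,n}(\mathbb{R}^2)$ and sensitive to the split/non-split distinction is the delicate step; the constructions in task~(i) and the inflation via $\tilde{\Psi}$ should be comparatively routine.
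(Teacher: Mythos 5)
Your plan for the non-split half is essentially the paper's: construct a factorization $\Psi_1\colon \pi_1(X_\tau)\to B^2_{2,2}(\mathbb{R}^2)$ for $n=2$ from the presentations (\ref{it:II})--(\ref{it:III}) and inflate it to $n=2k$ with the homomorphism $\tilde{\Psi}$ of Theorem~\ref{th:aumentak}. One remark: you do not need to arrange, nor to check, that $\Psi(\ker{\theta_\tau})\not\subset P_{2n}$, and you do not need $\tilde{\Psi}$ to ``preserve non-purity''. Once the $n$-split BUP is known to hold, Theorem~\ref{th:diagram}(c) says that \emph{any} factorization of $\theta_\tau$ through $\pi$ must witness the failure of one of the two properties, and it cannot be the split one; so it automatically witnesses the failure of the $n$-non-split BUP. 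This is exactly the shortcut the paper uses, and it spares you the coset bookkeeping.

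The genuine gap is in your task (ii), which you rightly call the heart of the matter but attack in the wrong way. First, the even/odd dichotomy does not originate in the split half at all: when $\theta_\tau(\delta)=\overline{1}$ the triple $(X,\tau;\mathbb{R}^2)$ has the $n$-split BUP for \emph{every} $n\geq 1$, by a two-line topological argument with no braid theory. Indeed $\theta_\tau(\delta)=\overline{1}$ gives the $1$-BUP by \cite[Theorem~2.5]{G}, and a split map $\phi=\{f_1,\ldots,f_n\}$ contains the single-valued map $f_1$, which must collapse some orbit $\{x,\tau(x)\}$; this is Schirmer's argument, already used in Proposition~\ref{propB}, and it is how the paper proves this half. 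Second, the invariant you propose provably cannot work. Abelianizing the presentation of $B^2_{n,n}(\mathbb{R}^2)$ (generators $s$, $t$, $d$ coming from the $\sigma_i$, the cross-block $A_{i,j}$, and $\Delta_{2n}$, with the single relation $2d=2n(n-1)s+n^2t$ forced by $\Delta_{2n}^2$ being the full twist), one sees that for $n$ \emph{even} the group $H_1(B^2_{n,n}(\mathbb{R}^2))$ contains a $2$-torsion class whose image under $\pi$ is $\overline{1}$, and the abelianized lifting problem --- including the constraint that $\ker{\theta_\tau}$ land in the image of $P_{2n}$, which is the subgroup generated by $2s$ and $t$ --- is then solvable (for the Klein bottle, send $u$ to the torsion class and $v$ to a suitable class). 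Hence no homomorphism of $B^2_{n,n}(\mathbb{R}^2)$ into an abelian group, in particular not exponent sum times the $\pi$-coset, can rule out split factorizations; the obstruction is genuinely the topological $1$-BUP. The parity of $n$ enters only in the non-split question: for $n$ odd the homomorphism $\varepsilon$ in the proof of Proposition~\ref{prop3} (equivalently, the torsion-freeness of the abelianization above) kills \emph{all} factorizations, while for $n$ even factorizations exist. So as written, your plan establishes the non-split half but stalls on the split half; replacing task (ii) by the $1$-BUP plus Schirmer reduction repairs it.
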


\begin{prop}\label{prop3}
If If $\theta_\tau(\delta) = \overline{1}$ and $n$ is odd, then the triple $(X,\tau;\mathbb{R}^2)$ has the $n$-BUP.
\end{prop}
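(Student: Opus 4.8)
The plan is to apply the algebraic criterion of Theorem~\ref{th:diagram}. Since $Y=\mathbb{R}^2$ is different from $\mathbb{S}^2$ and $\mathbb{RP}^2$, we are in case~(c), so the triple $(X,\tau;\mathbb{R}^2)$ has both the $n$-split and the $n$-non-split BUP as soon as there is \emph{no} homomorphism $\Psi\colon\pi_1(X_\tau)\to B^2_{n,n}(\mathbb{R}^2)$ with $\pi\circ\Psi=\theta_\tau$: the failure of either property forces the existence of such a $\Psi$ (with a condition on $\Psi(\ker{\theta_\tau})$ that we will not even need), so ruling out every lift $\Psi$ of $\theta_\tau$ through $\pi$ establishes the full $n$-BUP at once. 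I would therefore argue by contradiction, assuming such a $\Psi$ exists and extracting an impossibility from the hypotheses that $n$ is odd and $\theta_\tau(\delta)=\overline{1}$.

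The heart of the argument is a computation of the abelianization $H_1(B^2_{n,n}(\mathbb{R}^2))$. Starting from the presentation in Proposition~\ref{prop:B_{n,n}} (together with Remark~\ref{rem:Aij}), abelianizing collapses all Artin generators inside the first block to a single class $s$, those inside the second block to a class $t$, and all the crossing generators $A_{i,j}$ with $i\le n<j$ to a single class $r$, yielding $H_1(B_{n,n}(\mathbb{R}^2))\cong\mathbb{Z}^3=\langle s,t,r\rangle$ for $n\ge 2$. Passing to $B^2_{n,n}(\mathbb{R}^2)$, Remark~\ref{rem:delta} lets me take $\Delta_{2n}$ as the extra generator of the nontrivial coset; the conjugation rule~\eqref{Delta} identifies $s$ with $t$, and the full-twist expression~\eqref{full} gives the single additional relation $2[\Delta_{2n}]=2n(n-1)\,s+n^2\,r$. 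For $n$ odd the coefficient vector $(2n(n-1),\,n^2,\,-2)$ is primitive, so $H_1(B^2_{n,n}(\mathbb{R}^2))$ is torsion-free (the degenerate case $n=1$, where $B^2_{1,1}=B_2\cong\mathbb{Z}$, is torsion-free directly). Equivalently, one may package this as a homomorphism $\rho\colon B^2_{n,n}(\mathbb{R}^2)\to\mathbb{Z}$ that is even on $B_{n,n}(\mathbb{R}^2)$ and odd on $\Delta_{2n}$.

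With this in hand the contradiction is immediate. Recall from Remark~\ref{rm:delta} that $\hat{\delta}$ is the unique element of order two of $H_1(X_\tau)$ and that $\hat{\theta_\tau}(\hat{\delta})=\theta_\tau(\delta)=\overline{1}$. A lift $\Psi$ induces $\Psi_*\colon H_1(X_\tau)\to H_1(B^2_{n,n}(\mathbb{R}^2))$, and since the homomorphism $\pi$ of~\eqref{seq:2} factors through the abelianization as some $\bar{\pi}$, the commutativity of~\eqref{diagram} descends to $\hat{\theta_\tau}=\bar{\pi}\circ\Psi_*$. Now $\Psi_*(\hat{\delta})$ is a $2$-torsion element of the torsion-free group $H_1(B^2_{n,n}(\mathbb{R}^2))$, hence $\Psi_*(\hat{\delta})=0$, giving $\hat{\theta_\tau}(\hat{\delta})=\bar{\pi}(0)=\overline{0}$ and contradicting $\hat{\theta_\tau}(\hat{\delta})=\overline{1}$. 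Therefore no lift $\Psi$ exists and the proof is complete.

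The main obstacle is the abelianization computation above: correctly tracking the image of the full twist $\Delta_{2n}^2$ via~\eqref{full} and checking that the resulting relation vector is primitive precisely when $n$ is odd; everything else is formal. It is worth noting that for $n$ even the same computation produces a $\mathbb{Z}_2$ summand in $H_1(B^2_{n,n}(\mathbb{R}^2))$ on which $\bar{\pi}$ is nontrivial, which is exactly the algebraic room needed to build the counterexamples underlying Proposition~\ref{prop2}; this dichotomy in the torsion of $H_1(B^2_{n,n}(\mathbb{R}^2))$ is the precise mechanism distinguishing the odd and even cases. Note also that case~(\ref{it:I}) never arises here, since there $\delta=1$ forces $\theta_\tau(\delta)=\overline{0}$.
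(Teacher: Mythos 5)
Your proof is correct, and it reaches the contradiction by a genuinely different route from the paper's. Both arguments begin identically: if the $n$-BUP fails, the first part of Theorem~\ref{th:diagram} yields a lift $\Psi$ of $\theta_\tau$ through $\pi$ (note that the case distinction (a)--(c) matters only for the converse direction of that theorem, so your appeal to case (c) is harmless but unnecessary). The paper then stays inside $B_{n,n}$: using Remark~\ref{rem:delta} it writes $\Psi(g)=b_g\Delta_{2n}^{i(g)}$, substitutes into the surface relation of cases~(\ref{it:II})--(\ref{it:III}) to obtain an explicit identity in $B_{n,n}$ containing $\Delta_{2n}^2$, and applies a crossing-parity homomorphism $\varepsilon\colon B_{n,n}\to\mathbb{Z}_2$, checked to be invariant under conjugation by $\Delta_{2n}$, so that every term cancels except $\varepsilon(\Delta_{2n}^{2})=\overline{n^2}=\overline{1}$. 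You instead compute the full abelianization $H_1(B^2_{n,n}(\mathbb{R}^2))\cong\mathbb{Z}^3/\langle(2n(n-1),n^2,-2)\rangle$, observe it is torsion-free precisely because $n^2$ is odd, and kill the $2$-torsion class $\hat{\delta}$; I verified the computation (collapse of the presentation of Proposition~\ref{prop:B_{n,n}}, the identification $s=t$ from~(\ref{Delta}), and the relation $2[\Delta_{2n}]=2n(n-1)s+n^2r$ from~(\ref{full})), and it is right. The two proofs hinge on the same arithmetic fact --- the full twist has $n^2$ crossings between the two blocks, and the paper's $\varepsilon$ is exactly your class $r$ read modulo $2$ --- but yours is more structural: it needs no case analysis of presentations, never writes out the image of the surface relation, handles $n=1$ uniformly (where the paper defers to~\cite{G}), and uses only that $\hat{\theta_\tau}$ is nonzero on a $2$-torsion class of $H_1(X_\tau)$, so it actually proves a more general statement; moreover your remark that a $\mathbb{Z}_2$ summand appears exactly for even $n$ explains Proposition~\ref{prop2} by the same mechanism. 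The price is that your torsion-freeness claim requires the extension presentation of $B^2_{n,n}$ (relations of $B_{n,n}$, the conjugation relations, and $\Delta_{2n}^2$ equal to the word in~(\ref{full})) to be \emph{complete}, and requires that conjugation by $\Delta_{2n}$ fixes the crossing class $r$; both follow from the method of~\cite{Jo} already used for Proposition~\ref{prop:B_{n,n}} together with~(\ref{Delta}) and Remark~\ref{rem:Aij}, but since a quotient of a torsion-free abelian group can acquire torsion, the completeness of the relation set is not optional and these verifications should be written out. The paper's route avoids this entirely by needing only the single homomorphism $\varepsilon$ on $B_{n,n}$.
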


The remaining of this paper is devoted to prove Propositions \ref{prop1}-\ref{prop3}. We use without comments the notations estabilished in Sections~\ref{sec:3}.

\begin{proof}[Proof of Proposition \ref{prop1}]
The case $n=1$ is a direct consequence of \cite[Theorem~2.5]{G}. So suppose that $n \geq 2$. For each generator $g \in \pi_1(X_\tau)$, let $i(g) \in \{ 0,1 \}$ such that $\theta_\tau(g) = \overline{i(g)}$. Consider the $2n$-braid $\omega_{n}=\prod^{n-1}_{j=0}\prod^{2n-1-j}_{i=n-j}\sigma_{i}$ ($\omega_4$ is geometrically represented in Figure~\ref{exe}(b)). Note that $w_n \in B^2_{n,n} \setminus B_{n,n}$ and therefore $\pi(w_n)=\overline{1}$. We define $\Psi\colon\thinspace \pi_1(X_\tau) \to B^2_{n,n}$ by $\Psi(g) = \omega_n^{i(g)}$. Using the hypothesis $\theta_\tau(\delta) = \overline{0}$, in each case (\ref{it:I})-(\ref{it:III}) listed in the beginning of the section we may check that $\Psi$ is a well-defined homomorphism such that the diagram~(\ref{diagram}) is commutative. If $a \in \ker{\theta_\tau}$, then $\Psi(a)$ is a even power of $\omega_n$ which is an element of $P_{2n}$. Thus $\Psi(\ker{\theta_\tau}) \subset P_{2n}$. Follows by \reth{diagram} item~(\ref{it:c}) that $(X,\tau;\mathbb{R}^2)$ does not have the $n$-split BUP. Now, let us show that $(X,\tau;\mathbb{R}^2)$ does not have the $n$-non-split BUP. Once more, we may check that $\Psi'\colon\thinspace \pi_1(X_\tau) \to B^2_{n,n}$ defined by $\Psi'(g) = \left( \sigma_1 \omega_n \right)^{i(g)}$  extends to a homomorphism such that the diagram~(\ref{diagram}) is commutative. Since $\theta_\tau$ is an epimorphism, there exists an generator $g_1$ of $\pi_1(X_\tau)$ such that $\theta_\tau(g_1)=\overline{1}$. Then $g_1^2 \in \ker{\theta_\tau}$ and $\Psi'(g_1^2) = (\sigma_1 \omega_n)^2 \notin P_{2n}$. Thus $\Psi'(\ker{\theta_\tau}) \not\subset P_{2n}$ which ends the proof by~\reth{diagram} item~(\ref{it:c}).
\end{proof}

\begin{proof}[Proof of Proposition \ref{prop2}]
The hypothesis $\theta_\tau(\delta) = \overline{1}$ implies that $(X,\tau;\mathbb{R}^2)$ has the $1$-BUP by~\cite[Theorem~2.5]{G}. From this and using analogous arguments that we used in the proof of Proposition~\ref{propB} we conclude that $(X,\tau;\mathbb{R}^2)$ has the $n$-split BUP for all $n$ (in particular for $n$ even). Now, let us prove that $(X,\tau;\mathbb{R}^2)$ does not have $n$-non-split BUP. We claim that for $n=2$ there exists a homomorphism $\Psi_1\colon\thinspace \pi_1(X_\tau) \to B_{2,2}^2$ such that the diagram~(\ref{diagram}) is commutative. Assume for the moment that the claim holds. Let $\tilde{\Psi} \colon\thinspace B^2_{2,2} \to B^2_{2k,2k}$ be the homomorphism obtaneid by \reth{aumentak}, where $k = n/2$. Then the homomorphism $\Psi\colon\thinspace \pi_1(X_\tau) \to B^2_{2k,2k}$ given by the composition $\Psi = \tilde{\Psi} \circ \Psi_1$ makes the diagram~(\ref{diagram}) commutative. By \reth{diagram} item~(\ref{it:c}), the triple $(X,\tau;\mathbb{R}^2)$ does not have $n$-split BUP (if $\Psi \left( \ker{\theta_\tau}  \right) \subset P_{2n}$) or the triple does not have $n$-non-split BUP (if $\Psi \left( \ker{\theta_\tau} \right) \not\subset P_{2n}$). However, we already showed in the first part of the proof that $(X,\tau;\mathbb{R}^2)$ has the $n$-split BUP and therefore we conclude that the triple does not have the $n$-non-split BUP.

In order to complete the proof, we now prove the claim, \emph{i.e.}, let us define a homomorfism $\Psi_1\colon\thinspace \pi_1(X_\tau) \to B_{2,2}^2$ such that the diagram~(\ref{diagram}) is commutative. We will divide the proof into the mutually disjoint cases (\ref{it:i})-(\ref{it:iii}) as below. We will make use the following elements of $B^2_{2,2} \setminus B_{2,2}$: $\Delta_4$ (the half twist as defined in (\ref{def:garside})) and $\Omega = \sigma_2 \sigma_3 \sigma_1^{-1} \sigma_2^{-1}$. Using~(\ref{Delta}), we have
\begin{equation}\label{hom.bemdef}
	\Delta_{4}\Omega\Delta^{-1}_{4}=\Delta_{4}(\sigma_{2}\sigma_{3}\sigma^{-1}_{1}\sigma^{-1}_{2})\Delta^{-1}_{4}=\sigma_{2}\sigma_{1}\sigma^{-1}_{3}\sigma^{-1}_{2}=\Omega^{-1}.
\end{equation}
For each $g \in \pi_1(X_\tau)$, let $i(g) \in \{ 0,1 \}$ such that $\theta_\tau(g)=\overline{i(g)}$.

\begin{enumerate}[(i)]

\item\label{it:i} $\pi_1(X_\tau)$ is as in~(\ref{it:II}). We define 
$$\Psi_1 \colon\thinspace \begin{cases} 
u \mapsto \Omega; \\
v \mapsto \Delta_4 \Omega^{1-i(v)}; \\
a_k \mapsto \Omega^{i(k)} \text{ for all } k.
\end{cases}$$
By~(\ref{hom.bemdef}), we have
\begin{align*}
\Psi_1 \left( uvuv^{-1} [a_1,a_2] \ldots  [a_{2m-1},a_{2m}] \right) 
& = \Omega \Delta_4 \Omega^{1-i(v)} \Omega ( \Delta_4 \Omega^{1-i(v)} )^{-1} [ \Omega^{i(a_1)} , \Omega^{i(a_2)} ] \ldots [ \Omega^{i(a_{2m-1})} , \Omega^{i(a_{2m})} ] \\ 
& = \Omega \Delta_{4}\Omega\Delta^{-1}_{4}  = 1.
\end{align*}
So $\Psi_1$ extends to a homomorphism from $\pi_1(X_\tau)$ to $B^2_{2,2}$. By the hypothesis, we have that $\theta_\tau(u) = \overline{1}$ and from this we may conclude that the diagram~(\ref{diagram}) is commutative.

\item\label{it:ii} $\pi_1(X_\tau)$ is as in~(\ref{it:III}) and $\theta_\tau(a_1) = \overline{1}$ or $\theta_\tau(a_2) = \overline{1}$. We define 
$$\Psi_1 \colon\thinspace \begin{cases} 
	c \mapsto \Omega; \\
	a_1 \mapsto \Delta_4^{i(a_2)} \Omega^{i(a_1)+i(a_2)-2}; \\
	a_2 \mapsto \Delta_4^{1 - i(a_2)} \Omega;\\
	a_k \mapsto \Omega^{i(k)} \text{ for } k \neq 1,2.
\end{cases}$$
Suppose first that $\theta_\tau(a_1) = \overline{1}$ and $\theta_\tau(a_2)=\overline{0}$. By~(\ref{hom.bemdef}) we have
\begin{align*}
\Psi_1(c^2 [a_1,a_2] [a_3,a_4] \ldots [a_{2m-1},a_{2m}]) & = \Omega^2 [\Omega^{-1}, \Delta_4 \Omega] [ \Omega^{i(a_3)} , \Omega^{i(a_4)} ] \ldots [ \Omega^{i(a_{2m-1})} , \Omega^{i(a_{2m})} ] \\
& = \Omega \Delta_{4}\Omega\Delta^{-1}_{4}  = 1. 
\end{align*}
Once more $\Psi_1$ extends to a homomorphism from $\pi_1(X_\tau)$ to $B^2_{2,2}$ and the diagram~(\ref{diagram}) is commutative because $\theta_\tau(c) = \overline{1}$ by hypothesis. The proof of the cases ($\theta_\tau(a_1) = \overline{0}$ and $\theta_\tau(a_2)=\overline{1}$) and ($\theta_\tau(a_1) = \overline{1}$ and $\theta_\tau(a_2)=\overline{1}$) are completely similar and we omit the details.

\item\label{it:iii} $\pi_1(X_\tau)$ is as in~(\ref{it:III}) and $\theta_\tau(a_1) = \theta_\tau(a_2) = \overline{0}$. We leave to the reader to verify that the braids $\sigma_2 \Omega^{-1} \sigma_2^{-1}$ and $\sigma_2 \sigma_3^2 \sigma_2^{-1}$ belongs to $B_{2,2}$. We define 
$$\Psi_1 \colon\thinspace \begin{cases} 
	c \mapsto \Omega; \\
	a_1 \mapsto \sigma_2 \Omega^{-1} \sigma_2^{-1}; \\
	a_2 \mapsto \sigma_2 \sigma_3^2 \sigma_2^{-1};\\
	a_k \mapsto \Omega^{i(k)} \text{ for } k \neq 1,2.
\end{cases}$$
 From the Artin relations in $B_4$ we deduce that $\sigma_{i}^{\varepsilon} \sigma_{i+1} \sigma_{i}^{-\varepsilon} = \sigma_{i+1}^{-\varepsilon} \sigma_{i} \sigma_{i+1}^{\varepsilon}$, for $i=1,2$ and $\varepsilon = -1,1$. Using this and $\sigma_1 \sigma_3 = \sigma_3 \sigma_1$ we have
\begin{align*}
\Psi_1(c^2 [a_1,a_2] [a_3,a_4] \ldots [a_{2m-1},a_{2m}])
& = \Omega^2 [ \sigma_2 \Omega^{-1} \sigma_2^{-1} , \sigma_2 \sigma_3^2 \sigma_2^{-1} ] [ \Omega^{i(a_3)} , \Omega^{i(a_4)} ] \ldots [ \Omega^{i(a_{2m-1})} , \Omega^{i(a_{2m})} ] \\
& = \Omega^2 \sigma^{2}_{2}\sigma_{1}\sigma^{-1}_{3}(\sigma^{-1}_{2}\sigma^{2}_{3}\sigma_{2})\sigma_{3}\sigma^{-1}_{1}\sigma^{-1}_{2}\sigma^{-2}_{3}\sigma^{-1}_{2} \\ 
&= \Omega^2 \sigma^{2}_{2}\sigma_{1}\sigma^{-1}_{3}(\sigma_{3}\sigma^{2}_{2}\sigma^{-1}_{3})\sigma_{3}\sigma^{-1}_{1}\sigma^{-1}_{2}\sigma^{-2}_{3}\sigma^{-1}_{2}\\
& = \Omega^2 \sigma^{2}_{2}(\sigma_{1}\sigma^{2}_{2}\sigma^{-1}_{1})\sigma^{-1}_{2}\sigma^{-2}_{3}\sigma^{-1}_{2} \\
& = \Omega^2 \sigma^{2}_{2}(\sigma^{-1}_{2}\sigma^{2}_{1}\sigma_{2})\sigma^{-1}_{2}\sigma^{-2}_{3}\sigma^{-1}_{2} \\
& = \Omega^2 \sigma_{2}\sigma^{2}_{1}\sigma^{-2}_{3}\sigma^{-1}_{2} = \Omega^2 \Omega^{-2} = 1.
\end{align*}
As in the previous cases we have that the diagram~(\ref{diagram}) is commutative.
\end{enumerate} 
\end{proof}

\begin{proof}[Proof of Proposition \ref{prop3}]
If $n=1$, then result follows directly from~\cite[Theorem~2.5]{G}. So let $n$ be an odd positive integer different from $1$. In order to simplify the proof, we write
$$ \pi_1(X_\tau) =   \left\langle \alpha , \beta , a_1, a_2 , \ldots , a_{2m-1} , a_{2m} \, : \, \alpha \beta \alpha \beta^{-1} [a_1,a_2] \ldots  [a_{2m-1},a_{2m}] = 1 \right\rangle, $$
where $\begin{cases} \alpha = u \\ \beta = v \end{cases}$ if $\pi_1(X_\tau)$ is as in~(\ref{it:II}) and $\begin{cases} \alpha = c \\ \beta = 1 \end{cases}$ if $\pi_1(X_\tau)$ is as in~(\ref{it:III}). For each generator $g \in \pi_1(X_\tau)$, let $i(g) \in \{ 0,1 \}$ such that $\theta_\tau(g) = \overline{i(g)}$. By hypothesis, we have $i(\alpha) = \overline{1}$.  We argue by contradiciton. Suppose that the triple $(X,\tau,\mathbb R^{2})$ does not have $n$-BUP. By \reth{diagram} there exists a homomorphism $\Psi\colon\thinspace \pi_1(X_\tau) \to B_{n,n}^2$ such that the diagram~(\ref{diagram}) is commutative. From this and Remark~\ref{rem:delta}, there exists braids $b_\alpha, b_\beta, b_1, b_2, \ldots, b_{2m-1},b_{2m} \in B_{n,n}$ such that
$$
\Psi\colon\thinspace \begin{cases}
\alpha \mapsto b_\alpha \Delta \\
\beta \mapsto b_\beta \Delta^{i(\beta)} \\
a_k \mapsto b_k  \Delta^{i(a_k)},
\end{cases}
$$
where $\Delta = \Delta_{2n}$ is the half twist. Applying $\Psi$ in the relation of the presentation of $\pi_1(X_\tau)$  and using the fact that $B_{n,n}$ is a normal subgroup of $B^2_{n,n}$, we obtain the following equation in $B_{n,n}$:
\begin{multline}\label{eq:propIII}
b_\alpha \Bigl( \Delta b_\beta \Delta^{-1} \Bigr) \Delta^2 \Bigl( \Delta^{i(\beta)-1} b_\alpha \Delta^{-i(\beta)+1} \Bigr) b_\beta^{-1} . 
 \Biggl( b_1 \Bigl( \Delta^{i(a_1)} b_2 \Delta^{-i(a_1)} \Bigr)  \Bigl( \Delta^{i(a_2)} b_1 \Delta^{-i(a_2)} \Bigr)^{-1} b_2^{-1} \Biggr) \ldots \\
. \Biggl( b_{2m-1} \Bigl( \Delta^{i(a_{2m-1})} b_{2m} \Delta^{-i(a_{2m-1})} \Bigr)  \Bigl( \Delta^{i(a_{2m})} b_{2m-1} \Delta^{-i(a_{2m})} \Bigr)^{-1} b_{2m}^{-1} \Biggr)= 1,
\end{multline}
where $\Delta^2 = \Delta^2_{2n}$ is the full twist. Follows by Proposition~\ref{prop:B_{n,n}} that it is well defined the homomorphism $\varepsilon\colon\thinspace B_{n,n} \to \mathbb{Z}_2$ given in the generators by
$$\varepsilon \colon\thinspace \begin{cases}
\sigma_k \mapsto \overline{0}, \text{ if } 1 \leq k \leq n-1 \text{ or } n+1 \leq k \leq 2n-1, \\
A_{i,j} \mapsto \overline{0}, \text{ if } 1\leq i< j\leq n \text { or } n+1\leq i < j \leq 2n  , \\
A_{i,j} \mapsto \overline{1}, \text{ if } 1\leq i \leq n \text{ and } n +1 \leq j\leq 2n .
\end{cases}$$
By (\ref{Delta}) we have that
$$\varepsilon(\Delta \sigma_k \Delta^{-1}) = \varepsilon (\sigma_{2n-k}) = \overline{0} = \varepsilon(\sigma_k)$$
for all $k$. Using the equality above and Remark~\ref{rem:Aij}, we conclude
$$\varepsilon(\Delta A_{i,j} \Delta^{-1}) = \varepsilon (A_{i,j})$$
for all $i$ and $j$.
Since the target of $\varepsilon$ is $\mathbb{Z}_2$, we conclude that
\begin{center}
$\varepsilon(b) + \varepsilon( \Delta b \Delta^{-1}) = \overline{0}$ for all $b \in B_{n,n}$.
\end{center}
Applying $\varepsilon$ in both sides of equation~(\ref{eq:propIII}), using the property above and~(\ref{full}), we obtain:
\begin{equation*}
\overline{0} = \varepsilon(\Delta^2) = \displaystyle \sum_{1 \leq i < j \leq 2n} \varepsilon ( A_{i,j} ) = \sum_{j=n+1}^{2n}  \sum_{i=1}^{n} \overline{1} = \overline{n^2}.
\end{equation*}
The last equality is an absurd because $n$ is odd. Thus $(X,\tau;\mathbb{R}^2)$ has the $n$-BUP.
\end{proof}

\begin{section}{Some additional results about the triples $(X,\tau;Y)$}\label{sc:examples}

In this section, we obtain some partial results about the classification of the triples $(X,\tau;Y)$ in relation to the $n$-BUP, where $X$ and $Y$ are closed surfaces, $X \neq \mathbb{S}^2$. We stated these results in the following two propositons. The first one show us that there exists triples that do not have the $n$-BUP and in the second one we prove that the opposite situation is also true. We start with the following lemma:

\begin{lem}\label{lem:I} Let $X$ and $Y$ be closed surfaces. If a triple $(X,\tau;\mathbb{R}^2)$ does not have the $n$-split-BUP (resp.\ $n$-non-split BUP), then $(X,\tau;Y)$ also does not have the $n$-split-BUP (resp.\ $n$-non-split BUP).
\end{lem}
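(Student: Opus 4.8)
The plan is to exploit the fact that $\mathbb{R}^2$ embeds in any closed surface $Y$, so that a multimap into the plane can be composed with this embedding to produce a multimap into $Y$, while preserving the split/non-split dichotomy and the absence of Borsuk-Ulam coincidences. Concretely, recall that for any surface $Y$ there is an embedding $j\colon \mathbb{R}^2 \hookrightarrow Y$ onto an open disc in the interior of $Y$; in particular $j$ is injective and continuous. The key observation is that injectivity is exactly what is needed to transport both the configuration-space structure and the coincidence condition.

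First I would suppose that $(X,\tau;\mathbb{R}^2)$ does not have the $n$-split BUP (the non-split case being identical). By definition there exists an $n$-valued map $\phi\colon X \multimap \mathbb{R}^2$, split, with no Borsuk-Ulam coincidence with respect to $\tau$, i.e.\ $\phi(x) \cap \phi(\tau(x)) = \emptyset$ for all $x \in X$. I would then define $\psi \colon X \multimap Y$ by $\psi(x) = j(\phi(x))$, the image of the $n$-point set $\phi(x)$ under the embedding. Since $j$ is injective, $\psi(x)$ again has cardinality exactly $n$, so $\psi$ is a genuine $n$-valued map. The cleanest way to verify continuity of $\psi$ is through the single-valued correspondence: the induced map $j_*\colon D_n(\mathbb{R}^2) \to D_n(Y)$ on unordered configuration spaces is continuous, and $\Psi = j_* \circ \Phi$, where $\Phi\colon X \to D_n(\mathbb{R}^2)$ corresponds to $\phi$; hence $\psi$ corresponds to the continuous map $\Psi$ and is itself continuous by~\cite[Theorem 8]{GG1}.

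Next I would check the two properties that must be preserved. For the split property: if $\phi(x) = \{f_1(x),\ldots,f_n(x)\}$ with $f_i\colon X \to \mathbb{R}^2$ single-valued maps, then $\psi(x) = \{(j\circ f_1)(x),\ldots,(j\circ f_n)(x)\}$, exhibiting $\psi$ as split via the maps $j\circ f_i$. For the coincidence condition: because $j$ is injective, $j(A)\cap j(B) = j(A\cap B)$ for any subsets $A,B \subseteq \mathbb{R}^2$; applying this with $A = \phi(x)$ and $B = \phi(\tau(x))$ gives $\psi(x)\cap\psi(\tau(x)) = j\bigl(\phi(x)\cap\phi(\tau(x))\bigr) = j(\emptyset) = \emptyset$. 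Thus $\psi$ has no Borsuk-Ulam coincidence, and so $(X,\tau;Y)$ fails the $n$-split BUP.

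The non-split case requires one extra remark, and this is the only place where I anticipate any subtlety: I must ensure that the map $\psi$ I construct is genuinely \emph{non-split}, not merely that it arises from a non-split $\phi$. Here I would argue contrapositively. If $\psi$ were split, with $\psi(x) = \{g_1(x),\ldots,g_n(x)\}$, then since the image of every $\psi(x)$ lies in $j(\mathbb{R}^2)$, each $g_i$ factors through $j(\mathbb{R}^2)$; composing with the continuous inverse $j^{-1}\colon j(\mathbb{R}^2)\to\mathbb{R}^2$ (available because $j$ is an embedding onto its open image) yields single-valued maps $j^{-1}\circ g_i\colon X \to \mathbb{R}^2$ splitting $\phi$, contradicting the non-splitness of $\phi$. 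This closes the non-split case, and the lemma follows in both instances.
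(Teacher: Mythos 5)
Your proof is correct and takes essentially the same route as the paper: embed $\mathbb{R}^2$ as an open disc in $Y$, compose the coincidence-free $n$-valued map with the induced maps $F_n(\mathbb{R}^2)\to F_n(Y)$, $D_n(\mathbb{R}^2)\to D_n(Y)$, and use injectivity of the embedding to preserve both the cardinality and the disjointness of $\phi(x)$ and $\phi(\tau(x))$. Your contrapositive argument that non-splitness is preserved (inverting the embedding on its image) makes explicit a point the paper's proof leaves implicit, so no objection there.
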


\begin{proof}
Let $(X,\tau;\mathbb{R}^2)$ be a triple that does not have the $n$-split BUP (resp.\ $n$-non-split). As used in the proof of Theorem~\ref{th:diagram}, there exists a continuous map $\Phi_1 \colon\thinspace X \to F_n(Y)$ (resp.\ $\Phi_2\colon\thinspace X \to D_n(Y)$) such that $\Phi_i(\tau(x)) \neq \Phi_i(x)$ for all $x \in X$ and $i\in\{1,2\}$. Since $Y$ is a closed surface, there exists an embedding $\iota\colon\thinspace \mathbb{R}^2 \to Y$ which induces  the embeddings $\iota_1\colon\thinspace F_n(\mathbb{R}^2) \to F_n(Y)$ and $\iota_2\colon\thinspace D_n(\mathbb{R}^2) \to D_n(Y)$. Then the maps $\Phi^\prime_i = \iota_i \circ \Phi_i$ satisfies $\Phi^\prime_i(\tau(x)) \neq \Phi^\prime_i(x)$ for all $x \in X$ and $i \in \{1,2\}$. Therefore the triple $(X,\tau;Y)$ does not have the $n$-split BUP (resp.\ $n$-non-split BUP).
\end{proof}

Recall that $X_\tau$ denotes the orbit space of $X$ by the involution $\tau$ and $\hat{\delta}$ is defined after the sequence~(\ref{seq:theta}). The following proposition follows imediately from Theorem~\ref{th:result2} and Lemma~\ref{lem:I}

\begin{prop}\label{prop_examples_1} Let $n \geq 1$ and $X \neq \mathbb{S}^2$ and $Y$ be closed surfaces, where $X$ is equipped with a free involution $\tau$. The following conditions are true:
\begin{enumerate}
\item If $X_\tau$ is a closed orientable surface or $X_\tau$ is a closed non-orientable surface and $\hat{\theta_\tau}(\hat{\delta})=\overline{0}$, then the triple $(X,\tau;Y)$ does not have the $n$-split BUP and it does not have the $n$-non-split BUP.
	
\item If $X_\tau$ is a closed non-orientable surface, $\hat{\theta_\tau}(\hat{\delta})=\overline{1}$ and $n$ is even, then the triple $(X,\tau;Y)$ does not have the $n$-non-split BUP.
\end{enumerate}
\end{prop}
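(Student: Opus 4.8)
The plan is to deduce Proposition~\ref{prop_examples_1} directly from Theorem~\ref{th:result2} together with Lemma~\ref{lem:I}, treating the two items separately. By Remark~\ref{rm:delta} the hypotheses are phrased identically to those of Theorem~\ref{th:result2}: the condition on $X_\tau$ and on $\hat{\theta_\tau}(\hat{\delta})$ simply selects which item of that theorem applies. The key observation is that Theorem~\ref{th:result2} tells us precisely when the triple $(X,\tau;\mathbb{R}^2)$ \emph{fails} to have a given BUP, and Lemma~\ref{lem:I} lets us transport each such failure from the target $\mathbb{R}^2$ to an arbitrary closed surface $Y$.

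For item~(1), I would argue as follows. The hypothesis (either $X_\tau$ orientable, or $X_\tau$ non-orientable with $\hat{\theta_\tau}(\hat{\delta})=\overline{0}$) is exactly the hypothesis of Theorem~\ref{th:result2}~(1), so that theorem gives that $(X,\tau;\mathbb{R}^2)$ has neither the $n$-split BUP nor the $n$-non-split BUP. Applying Lemma~\ref{lem:I} twice, once to the split case and once to the non-split case, I conclude that $(X,\tau;Y)$ likewise has neither the $n$-split nor the $n$-non-split BUP, which is the assertion.

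For item~(2), the hypothesis ($X_\tau$ non-orientable, $\hat{\theta_\tau}(\hat{\delta})=\overline{1}$, $n$ even) matches Theorem~\ref{th:result2}~(2), which asserts that $(X,\tau;\mathbb{R}^2)$ has the $n$-split BUP but does \emph{not} have the $n$-non-split BUP. Here only the non-split failure transports: I apply the non-split version of Lemma~\ref{lem:I} to conclude that $(X,\tau;Y)$ does not have the $n$-non-split BUP. I would emphasize that one cannot say anything about the $n$-split BUP for general $Y$ from this argument, since Lemma~\ref{lem:I} only propagates failures and not the positive split statement; this is consistent with the weaker conclusion stated in item~(2) compared to item~(1).

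This proof is essentially a bookkeeping reduction and I do not anticipate a genuine obstacle; the only point requiring care is to invoke Remark~\ref{rm:delta} so that the hypotheses $\hat{\theta_\tau}(\hat{\delta})=\overline{0}$ or $\overline{1}$ translate correctly into the conditions of Theorem~\ref{th:result2} (stated there in the same terms), and to apply Lemma~\ref{lem:I} in the correct split/non-split variant in each case.
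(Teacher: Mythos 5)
Your proof is correct and is exactly the paper's argument: the paper states that Proposition~\ref{prop_examples_1} follows immediately from Theorem~\ref{th:result2} combined with Lemma~\ref{lem:I}, which is precisely your reduction, including the observation that only the non-split failure transports in item~(2).
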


Lastly, using the same idea of the proof of Proposition~\ref{propB}, we may conclude that if a triple $(X,\tau;Y)$ has the $1$-BUP, then the same triple has the $n$-split BUP for all positive integer $n$. Follows directly from this fact and \cite[Theorem~12 and Proposition~32]{GG0} the following result. Recall that the homomorphism $\theta_\tau$ was defined after the sequence~(\ref{seq:theta}).

\begin{prop}\label{prop_examples_2}
Let $X$ and $Y$ be closed surfaces different of $\mathbb{S}^2$, where $X$ is eqquiped with a free involution $\tau$ and $Y$ is orientable. In relation to the presentation of $\pi_1(X_\tau)$ given in Section~\ref{sec:res2}, items~(\ref{it:II}) and~(\ref{it:III}), suppose that one of the following conditions is true: 
\begin{enumerate}
\item $X_\tau$ is the Klein bottle and $\theta_\tau(u)=\overline{1}$.

\item $X_\tau$ is the closed non-orientable surface of genus $3$ and $\theta_\tau(c)= \theta_\tau(a_i) = \overline{1}$ for some $i \in \{ 1,2 \}$.
\end{enumerate}	
Then the triple $(X,\tau;Y)$ has the $n$-split BUP for all $n \geq 1$.
\end{prop}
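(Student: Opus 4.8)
The plan is to reduce the $n$-split case to the single-valued (i.e.\ $1$-BUP) case and then to invoke the classification of triples with the $1$-BUP established in~\cite{GG0}. The key observation, already exploited in the proof of Proposition~\ref{propB}, is that a split multimap is controlled by any single one of its branches. The substantive part is not the reduction, which is elementary, but rather matching the combinatorial hypotheses (1) and (2) with the precise statements of~\cite[Theorem~12 and Proposition~32]{GG0}, taking care that the presentations of $\pi_1(X_\tau)$ agree.

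First I would record the general reduction as a preliminary step. Suppose the triple $(X,\tau;Y)$ has the $1$-BUP and let $\phi\colon X\multimap Y$ be an arbitrary split $n$-valued map, say $\phi(x) = \left\{ f_1(x),\ldots,f_n(x) \right\}$ with each $f_i\colon X\to Y$ continuous. Applying the $1$-BUP to the single-valued map $f_1$ produces a point $x\in X$ with $f_1(x) = f_1(\tau(x))$; this common value lies in $\phi(x)\cap\phi(\tau(x))$, so $\phi$ has a Borsuk-Ulam coincidence with respect to $\tau$. Since $\phi$ was arbitrary, the triple has the $n$-split BUP for every $n\geq 1$. This is exactly the argument of~\cite[Theorem~2]{Sc1} reproduced in the proof of Proposition~\ref{propB}.

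Second, I would verify that under hypotheses (1) and (2) the triple $(X,\tau;Y)$ does have the $1$-BUP. Here $Y$ is an orientable closed surface different from $\mathbb{S}^2$, and the classification of such triples in relation to the $1$-BUP is precisely the content of~\cite[Theorem~12 and Proposition~32]{GG0}. In case (1), $X_\tau$ is the Klein bottle with $\theta_\tau(u) = \overline{1}$, and in case (2), $X_\tau$ is the closed non-orientable surface of genus $3$ with $\theta_\tau(c) = \theta_\tau(a_i) = \overline{1}$ for some $i\in\{1,2\}$; in both situations the cited results assert that the $1$-BUP holds. Combining this with the reduction of the previous paragraph yields the $n$-split BUP for all $n\geq 1$, which is the claim.

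The only real care needed is bookkeeping: the element $u$ (resp.\ $c$) of the presentations in~(\ref{it:II}) and~(\ref{it:III}) must be identified with the corresponding generator used in~\cite{GG0}, and one should check that the homomorphism $\theta_\tau$ here corresponds to the orientation-type homomorphism appearing in their criterion. Once this dictionary is fixed, the argument is immediate; in contrast with the proofs of Propositions~\ref{prop1}--\ref{prop3}, no braid-group computation is required, since the non-split behaviour plays no role in a purely split statement.
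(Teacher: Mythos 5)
Your proposal is correct and follows essentially the same route as the paper: the paper likewise observes (using the idea of the proof of Proposition~\ref{propB}, i.e.\ Schirmer's argument applied to one branch $f_1$ of the split map) that the $1$-BUP implies the $n$-split BUP for all $n$, and then concludes by citing \cite[Theorem~12 and Proposition~32]{GG0}, which give the $1$-BUP under hypotheses (1) and (2).
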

	
\end{section}

We hope to extend Propositions~\ref{prop_examples_1} and~\ref{prop_examples_2} and obtain a full classification of the triples $(X,\tau;Y)$ in relation to the $n$-BUP, and we intend to present these results soon. In order to accomplish this, we are using the techniques developed in this article, in particular the intermediary braid groups $B_{n,n}^2(Y)$ and Theorem~\ref{th:diagram}.

\begin{section}{Acknowledgements}
The work on this paper was developed during the Postdoctoral Internship of the first author at IME-USP from March 2020 to August 2021 that was supported  by the Capes/INCTMat project nº 88887.136371/2017-00-465591/2014-0. The authors would like to thanks the professors Daciberg Lima Gonçalves and Daniel Vendrúscolo for their great contributions.
\end{section}

\end{document}